\documentclass[11pt,a4paper]{article}
\usepackage[pdftex]{graphicx}
\usepackage{amsmath,amssymb,amsfonts,amsthm}
\numberwithin{equation}{section}
\usepackage{indentfirst}
\usepackage{enumitem} 
\usepackage[margin=1.3in]{geometry}
\usepackage{fancyhdr}

\usepackage[colorlinks=true,linkcolor=magenta,citecolor=blue,urlcolor=cyan]{hyperref}
\usepackage{titlesec}
\usepackage{etoolbox}
\usepackage{graphicx}  
\usepackage{changepage}
\theoremstyle{plain}
\newtheorem{theorem}{Theorem}[section]

\newtheorem{remark}{Remark}[section]

\makeatletter
\renewcommand{\maketitle}{
	\begin{center}
		{\Large\bfseries{\@title}\par}
		\vskip 1em
		{\normalsize
			\lineskip .5em
			\begin{tabular}[t]{c}
				\@author
			\end{tabular}\par}
		\vskip 1.5em
	\end{center}
}
\makeatother
\renewenvironment{abstract}{
	\begin{adjustwidth}{1.3cm}{1.3cm}
		\noindent{\large\bfseries{A{\scriptsize BSTRACT.}}}
	}{
	\end{adjustwidth}
}

\usepackage{color}
\usepackage{xcolor}
\usepackage[normalem]{ulem} 
\usepackage{soul}

\usepackage{graphicx} 
\usepackage{xstring}  

\usepackage{xstring}
\usepackage{graphicx} 

\usepackage{marvosym}
\begin{document}
	
	\title{On Ramanujan's $q$-Continued Fractions of Order Thirty-Four and Sixty-Eight}
    \vspace{0.9cm}

	\author{ \textbf {Dipika Sarkar and S. N. Fathima}}
	
	\maketitle
	
	\begin{abstract}
        We derived $q$-continued fractions $X_i(q)$ of order thirty-four and continued fractions $Y_i(q)$ of order sixty-eight from a general continued fraction identity of Ramanujan, where $i=1,2,3,4,5,6,7$ and $8$. We established some theta-function identities, and one has been proved for the continued fractions $X_i(q)$ and $Y_i(q)$. Furthermore, we obtained results on vanishing coefficients arising from these continued fractions and their reciprocals. As an application of the theta-function identities for $Y_i(q)$, we derived certain color partition identities.
	    \vspace{0.5cm}
		
		\noindent {\bf \small Keywords:} 
        $q$-continued fractions; theta-functions; vanishing coefficients; integer partitions; colored partitions
        \vspace{0.5cm}
		
		\noindent {\bf \small Mathematics Subject Classification (2020):} 
		05A17, 11P83
	\end{abstract}
	
	\bigskip
	
	\vspace{0.5em}
	\section{Introduction}
	
	For any complex numbers $\lambda$ and $q$, define the $q$-product $(\lambda; q)_\infty$ as
	\begin{equation}
		(\lambda; q)_\infty := \prod_{n=0}^{\infty} \left(1 - \lambda q^n \right), \quad |q| < 1.
		\tag{1.1}
	\end{equation}
	
	For brevity, we often write
	\[
	(\lambda_1; q)_\infty (\lambda_2; q)_\infty (\lambda_3; q)_\infty \cdots (\lambda_l; q)_\infty
	= (\lambda_1, \lambda_2, \lambda_3, \cdots, \lambda_l; q)_\infty.
	\]
	
	The Ramanujan’s general theta-function $f(a,b)$ \cite[p.~34]{Berndt} is defined as
	\begin{equation}
		f(a,b) = \sum_{n=-\infty}^{\infty} a^{\frac{n(n+1)}{2}} b^{\frac{n(n-1)}{2}}, 
		\quad |ab| < 1.
		\tag{1.2}
	\end{equation}
	
	In terms of $f(a,b)$, Jacobi’s triple product identity \cite[p.~35, Entry 19]{Berndt}
	can be stated as
	\begin{equation}
		f(a,b) = (-a;ab)_\infty (-b;ab)_\infty (ab;ab)_\infty
		= (-a, -b, ab; ab)_\infty.
		\tag{1.3}
	\end{equation}
	
	Three useful special cases of $f(a,b)$ are the theta-functions $\phi(q)$, $\psi(q)$ and $f(-q)$ 
	\cite[p.~36, Entry 22(i)--(iii)]{Berndt} given by
	
	\begin{equation}
		\phi(q) := f(q,q) = \sum_{n=-\infty}^{\infty} q^{n^2}
		= \frac{(-q; -q)_\infty}{(q; -q)_\infty},
		\tag{1.4}
	\end{equation}
	
	\begin{equation}
		\psi(q) := f(q,q^3) = \sum_{n=0}^{\infty} q^{\frac{n(n+1)}{2}}
		= \frac{(q^2; q^2)_\infty}{(q; q^2)_\infty},
		\tag{1.5}
	\end{equation}
	
	\begin{equation}
		f(-q) := f(-q,-q^2) = \sum_{n=-\infty}^{\infty} (-1)^n q^{\frac{n(3n-1)}{2}}
		= (q;q)_\infty.
		\tag{1.6}
	\end{equation}
	
	Ramanujan also defined the function $\chi(q)$ \cite[p.~36, Entry 22(iv)]{Berndt} as
	\begin{equation}
		\chi(q) = (-q; q^2)_\infty.
		\tag{1.7}
	\end{equation}
	
	One of Ramanujan’s remarkable contributions is in the field of $q$-continued fractions. 
	Ramanujan recorded many continued fractions in his notebooks and the most famous among them 
	is the Rogers--Ramanujan continued fraction $R(q)$ is defined by
	
	\begin{equation}
		R(q) := q^{1/5}\frac{(q,q^{4};q^{5})_{\infty}}{(q^{2},q^{3};q^{5})_{\infty}}
		= q^{1/5}\frac{f(-q,-q^{4})}{f(-q^{2},-q^{3})}
		= \cfrac{q^{1/5}}{1+\cfrac{q}{1+\cfrac{q^{2}}{1+\cfrac{q^{3}}{1+\cdots}}}},
		\qquad |q|<1.
		\tag{1.8}
	\end{equation}
	
	The Rogers--Ramanujan continued fraction $R(q)$ is often referred to as the 
	continued fraction of order five. Ramanujan also offered some theta-function 
	identities and modular relations for the continued fraction $R(q)$. A description
	of these can be found in \cite{Berndt}. Ramanujan also recorded some general 
	continued fraction identities in his notebook. 
	
	For example, Ramanujan recorded the following general continued fraction 
	identity \cite[p.~24, Entry~12]{Berndt}.
	
	Suppose that $a$, $b$, and $q$ are complex numbers with $|ab|<1$ and $|q|<1$, 
	or that $a=b^{2l+1}$ for some integer $l$. Then
	
	\begin{equation}
		\frac{(a^{2}q^{3};q^{4})_{\infty}(b^{2}q^{3};q^{4})_{\infty}}
		{(a^{2}q;q^{4})_{\infty}(b^{2}q;q^{4})_{\infty}}
		=
		\cfrac{1}{1-ab+
			\cfrac{(a-bq)(b-aq)}
			{(1-ab)(q^{2}+1)+
				\cfrac{(a-bq^{3})(b-aq^{3})}
				{(1-ab)(q^{4}+1)+\cdots}}
		}.
		\tag{1.9}
	\end{equation}
	
	By specialising the values of $a$ and $b$, and taking suitable powers of $q$, 
	one can obtain $q$-continued fractions of particular order which satisfy 
	theta-function identities analogous to those of $R(q)$.
    
    In this paper, we deal with the $q$-continued fractions of order thirty-four and sixty-eight .By replacing $q$ by $q^{17/2}$ in ${(1.9)}$, setting 
	$\{a=q^{1/4},\, b=q^{33/4}\}$, $\{a=q^{3/4},\, b=q^{31/4)}\}$, $\{a=q^{5/4},\, b=q^{29/4)}\}$, $\{a=q^{7/4},\, b=q^{27/4)}\}$, $\{a=q^{9/4},\, b=q^{25/4)}\}$, $\{a=q^{11/4},\, b=q^{23/4)}\}$, $\{a=q^{13/4},\, b=q^{21/4)}\}$ and $\{a=q^{15/4},\, b=q^{19/4)}\}$, simplifying using the results
$\{(q^{42};q^{34})_{\infty} = (q^{8};q^{34})_{\infty}/(1-q^{8})\}$, 
$\{(q^{41};q^{34})_{\infty} = (q^{7};q^{34})_{\infty}/(1-q^{7})\}$, 
$\{(q^{40};q^{34})_{\infty} = (q^{6};q^{34})_{\infty}/(1-q^{6})\}$, 
$\{(q^{39};q^{34})_{\infty} = (q^{5};q^{34})_{\infty}/(1-q^{5})\}$, 
$\{(q^{38};q^{34})_{\infty} = (q^{4};q^{34})_{\infty}/(1-q^{4})\}$, 
$\{(q^{37};q^{34})_{\infty} = (q^{3};q^{34})_{\infty}/(1-q^{3})\}$, 
$\{(q^{36};q^{34})_{\infty} = (q^{2};q^{34})_{\infty}/(1-q^{2})\}$
and
$\{(q^{35};q^{34})_{\infty} = (q;q^{34})_{\infty}/(1-q)\}$.
We obtain the following eight continued fractions of order thirty-four, respectively.:
	\begin{align}
		X_1(q) :&= q^{1/4}\frac{(q^{8},q^{26};q^{34})_{\infty}}{(q^{9},q^{25};q^{34})_{\infty}}
		= q^{1/4}\frac{f(-q^{8},-q^{26})}{f(-q^{9},-q^{25})}  \notag\\
		&=
		\cfrac{q^{1/4}(1-q^{8})}
		{(1-q^{17/2})
			+\cfrac{q^{17/2}(1-q^{1/2})(1-q^{33/2})}
			{(1-q^{17/2})(1+q^{17})
				+\cfrac{q^{17/2}(1-q^{35/2})(1-q^{67/2})}
				{(1-q^{17/2})(1+q^{34})+\cdots}}}.
		\tag{1.10}
	\end{align}
	
	\begin{align}
		X_2(q) :&= q^{3/4}\frac{(q^{7},q^{27};q^{34})_{\infty}}{(q^{10},q^{24};q^{34})_{\infty}}
		= q^{3/4}\frac{f(-q^{7},-q^{27})}{f(-q^{10},-q^{24})}  \notag\\
		&=
		\cfrac{q^{3/4}(1-q^{7})}
		{(1-q^{17/2})
			+\cfrac{q^{17/2}(1-q^{3/2})(1-q^{31/2})}
			{(1-q^{17/2})(1+q^{17})
				+\cfrac{q^{17/2}(1-q^{37/2})(1-q^{65/2})}
				{(1-q^{17/2})(1+q^{34})+\cdots}}}.
		\tag{1.11}
	\end{align}
	
	\begin{align}
		X_3(q) :&= q^{5/4}\frac{(q^{6},q^{28};q^{34})_{\infty}}{(q^{11},q^{23};q^{34})_{\infty}}
		= q^{5/4}\frac{f(-q^{6},-q^{28})}{f(-q^{11},-q^{23})}  \notag\\
		&=
		\cfrac{q^{5/4}(1-q^{6})}
		{(1-q^{17/2})
			+\cfrac{q^{17/2}(1-q^{5/2})(1-q^{29/2})}
			{(1-q^{17/2})(1+q^{17})
				+\cfrac{q^{17/2}(1-q^{39/2})(1-q^{63/2})}
				{(1-q^{17/2})(1+q^{34})+\cdots}}}.
		\tag{1.12}
	\end{align}
	
	\begin{align}
		X_4(q) :&= q^{7/4}\frac{(q^{5},q^{29};q^{34})_{\infty}}{(q^{12},q^{22};q^{34})_{\infty}}
		= q^{7/4}\frac{f(-q^{5},-q^{29})}{f(-q^{12},-q^{22})}  \notag\\
		&=
		\cfrac{q^{7/4}(1-q^{5})}
		{(1-q^{17/2})
			+\cfrac{q^{17/2}(1-q^{7/2})(1-q^{27/2})}
			{(1-q^{17/2})(1+q^{17})
				+\cfrac{q^{17/2}(1-q^{41/2})(1-q^{61/2})}
				{(1-q^{17/2})(1+q^{34})+\cdots}}}.
		\tag{1.13}
	\end{align}
	
	\begin{align}
		X_5(q) :&= q^{9/4}\frac{(q^{4},q^{30};q^{34})_{\infty}}{(q^{13},q^{21};q^{34})_{\infty}}
		= q^{9/4}\frac{f(-q^{4},-q^{30})}{f(-q^{13},-q^{21})}  \notag\\
		&=
		\cfrac{q^{9/4}(1-q^{4})}
		{(1-q^{17/2})
			+\cfrac{q^{17/2}(1-q^{9/2})(1-q^{25/2})}
			{(1-q^{17/2})(1+q^{17})
				+\cfrac{q^{17/2}(1-q^{43/2})(1-q^{59/2})}
				{(1-q^{17/2})(1+q^{34})+\cdots}}}.
		\tag{1.14}
	\end{align}
	
	\begin{align}
		X_6(q) :&= q^{11/4}\frac{(q^{3},q^{31};q^{34})_{\infty}}{(q^{14},q^{20};q^{34})_{\infty}}
		= q^{11/4}\frac{f(-q^{3},-q^{31})}{f(-q^{14},-q^{20})}  \notag\\
		&=
		\cfrac{q^{11/4}(1-q^{3})}
		{(1-q^{17/2})
			+\cfrac{q^{17/2}(1-q^{11/2})(1-q^{23/2})}
			{(1-q^{17/2})(1+q^{17})
				+\cfrac{q^{17/2}(1-q^{45/2})(1-q^{57/2})}
				{(1-q^{17/2})(1+q^{34})+\cdots}}}.
		\tag{1.15}
	\end{align}
	\begin{align}
		X_7(q) :&= q^{13/4}\frac{(q^{2},q^{32};q^{34})_{\infty}}{(q^{15},q^{19};q^{34})_{\infty}}
		= q^{13/4}\frac{f(-q^{2},-q^{32})}{f(-q^{15},-q^{19})}  \notag\\
		&=
		\cfrac{q^{13/4}(1-q^{2})}
		{(1-q^{17/2})
			+\cfrac{q^{17/2}(1-q^{13/2})(1-q^{21/2})}
			{(1-q^{17/2})(1+q^{17})
				+\cfrac{q^{17/2}(1-q^{47/2})(1-q^{55/2})}
				{(1-q^{17/2})(1+q^{34})+\cdots}}}.
		\tag{1.16}
	\end{align}
	
	\begin{align}
		X_8(q) :&= q^{15/4}\frac{(q,q^{33};q^{34})_{\infty}}{(q^{16},q^{18};q^{34})_{\infty}}
		= q^{15/4}\frac{f(-q,-q^{33})}{f(-q^{16},-q^{18})}  \notag\\
		&=
		\cfrac{q^{15/4}(1-q)}
		{(1-q^{17/2})
			+\cfrac{q^{17/2}(1-q^{15/2})(1-q^{19/2})}
			{(1-q^{17/2})(1+q^{17})
				+\cfrac{q^{17/2}(1-q^{49/2})(1-q^{53/2})}
				{(1-q^{17/2})(1+q^{34})+\cdots}}}.
		\tag{1.17}
	\end{align}
	
	Similarly, to obtain the $q$-continued fraction of order sixty-eight, replacing $q$ by $q^{17}$ in ${(1.9)}$, setting 
	$\{a=q^{1},\, b=q^{16}\}$, $\{a=q^{2},\, b=q^{15}\}$, $\{a=q^{3},\, b=q^{14)}\}$, $\{a=q^{4},\, b=q^{13)}\}$, $\{a=q^{5},\, b=q^{12)}\}$, $\{a=q^{6},\, b=q^{11)}\}$, $\{a=q^{7},\, b=q^{10)}\}$ and $\{a=q^{8},\, b=q^{9)}\}$, simplifying using the results
$\{(q^{83};q^{68})_{\infty} = (q^{15};q^{68})_{\infty}/(1-q^{15})\}$, 
$\{(q^{81};q^{68})_{\infty} = (q^{13};q^{68})_{\infty}/(1-q^{13})\}$,
$\{(q^{79};q^{68})_{\infty} = (q^{11};q^{68})_{\infty}/(1-q^{11})\}$, 
$\{(q^{77};q^{68})_{\infty} = (q^{9};q^{68})_{\infty}/(1-q^{9})\}$,
$\{(q^{75};q^{68})_{\infty} = (q^{7};q^{68})_{\infty}/(1-q^{7})\}$, 
$\{(q^{73};q^{68})_{\infty} = (q^{5};q^{68})_{\infty}/(1-q^{5})\}$,
$\{(q^{71};q^{68})_{\infty} = (q^{3};q^{68})_{\infty}/(1-q^{3})\}$ and
$\{(q^{69};q^{68})_{\infty} = (q;q^{68})_{\infty}/(1-q)\}$.
We obtain the following eight continued fractions of order sixty-eight from the equation ${(1.9)}$, which are given by,
	\begin{align}
		Y_1(q) :&= q\frac{(q^{15},q^{53};q^{68})_{\infty}}{(q^{19},q^{49};q^{68})_{\infty}}
		= q\frac{f(-q^{15},-q^{53})}{f(-q^{19},-q^{49})}  \notag\\
		&=
		\cfrac{q(1-q^{15})}
		{(1-q^{17})
			+\cfrac{q^{17}(1-q^{32})(1-q^{2})}
			{(1-q^{17})(1+q^{34})
				+\cfrac{q^{17}(1-q^{36})(1-q^{66})}
				{(1-q^{17})(1+q^{68})+\cdots}}}.
		\tag{1.18}
	\end{align}
	\begin{align}
		Y_2(q) :&= q^{2}\frac{(q^{13},q^{55};q^{68})_{\infty}}{(q^{21},q^{47};q^{68})_{\infty}}
		= q^{2}\frac{f(-q^{13},-q^{55})}{f(-q^{21},-q^{47})}  \notag\\
		&=
		\cfrac{q^{2}(1-q^{13})}
		{(1-q^{17})
			+\cfrac{q^{17}(1-q^{30})(1-q^{4})}
			{(1-q^{17})(1+q^{34})
				+\cfrac{q^{17}(1-q^{38})(1-q^{64})}
				{(1-q^{17})(1+q^{68})+\cdots}}}.
		\tag{1.19}
	\end{align}
	\begin{align}
		Y_3(q) :&= q^{3}\frac{(q^{11},q^{57};q^{68})_{\infty}}{(q^{23},q^{45};q^{68})_{\infty}}
		= q^{3}\frac{f(-q^{11},-q^{57})}{f(-q^{23},-q^{45})}  \notag\\
		&=
		\cfrac{q^{3}(1-q^{11})}
		{(1-q^{17})
			+\cfrac{q^{17}(1-q^{28})(1-q^{6})}
			{(1-q^{17})(1+q^{34})
				+\cfrac{q^{17}(1-q^{40})(1-q^{62})}
				{(1-q^{17})(1+q^{68})+\cdots}}}.
		\tag{1.20}
	\end{align}
	\begin{align}
		Y_4(q) :&= q^{4}\frac{(q^{9},q^{59};q^{68})_{\infty}}{(q^{25},q^{43};q^{68})_{\infty}}
		= q^{4}\frac{f(-q^{9},-q^{59})}{f(-q^{25},-q^{43})}  \notag\\
		&=
		\cfrac{q^{4}(1-q^{9})}
		{(1-q^{17})
			+\cfrac{q^{17}(1-q^{26})(1-q^{8})}
			{(1-q^{17})(1+q^{34})
				+\cfrac{q^{17}(1-q^{42})(1-q^{60})}
				{(1-q^{17})(1+q^{68})+\cdots}}}.
		\tag{1.21}
	\end{align}
	\begin{align}
		Y_5(q) :&= q^{5}\frac{(q^{7},q^{61};q^{68})_{\infty}}{(q^{27},q^{41};q^{68})_{\infty}}
		= q^{5}\frac{f(-q^{7},-q^{61})}{f(-q^{27},-q^{41})}  \notag\\
		&=
		\cfrac{q^{5}(1-q^{7})}
		{(1-q^{17})
			+\cfrac{q^{17}(1-q^{24})(1-q^{10})}
			{(1-q^{17})(1+q^{34})
				+\cfrac{q^{17}(1-q^{44})(1-q^{58})}
				{(1-q^{17})(1+q^{68})+\cdots}}}.
		\tag{1.22}
	\end{align}
	\begin{align}
		Y_6(q) :&= q^{6}\frac{(q^{5},q^{63};q^{68})_{\infty}}{(q^{29},q^{39};q^{68})_{\infty}}
		= q^{6}\frac{f(-q^{5},-q^{63})}{f(-q^{29},-q^{39})}  \notag\\
		&=
		\cfrac{q^{6}(1-q^{5})}
		{(1-q^{17})
			+\cfrac{q^{17}(1-q^{22})(1-q^{12})}
			{(1-q^{17})(1+q^{34})
				+\cfrac{q^{17}(1-q^{46})(1-q^{56})}
				{(1-q^{17})(1+q^{68})+\cdots}}}.
		\tag{1.23}
	\end{align}
	\begin{align}
		Y_7(q) :&= q^{7}\frac{(q^{3},q^{65};q^{68})_{\infty}}{(q^{31},q^{37};q^{68})_{\infty}}
		= q^{7}\frac{f(-q^{3},-q^{65})}{f(-q^{31},-q^{37})}  \notag\\
		&=
		\cfrac{q^{7}(1-q^{3})}
		{(1-q^{17})
			+\cfrac{q^{17}(1-q^{20})(1-q^{14})}
			{(1-q^{17})(1+q^{34})
				+\cfrac{q^{17}(1-q^{48})(1-q^{54})}
				{(1-q^{17})(1+q^{68})+\cdots}}}.
		\tag{1.24}
	\end{align}
	\begin{align}
		Y_8(q) :&= q^{8}\frac{(q,q^{67};q^{68})_{\infty}}{(q^{33},q^{35};q^{68})_{\infty}}
		= q^{8}\frac{f(-q,-q^{67})}{f(-q^{33},-q^{35})}  \notag\\
		&=
		\cfrac{q^{8}(1-q)}
		{(1-q^{17})
			+\cfrac{q^{17}(1-q^{18})(1-q^{16})}
			{(1-q^{17})(1+q^{34})
				+\cfrac{q^{17}(1-q^{50})(1-q^{52})}
				{(1-q^{17})(1+q^{68})+\cdots}}}.
		\tag{1.25}
	\end{align}

	In Section 2, we prove some theta-function identities for the continued fractions $X_i(q)$ and $Y_i(q)$.
	In Section 3, we derived some vanishing coefficient results arising from the continued fractions $X_i(q)$ and $Y_i(q)$, where $i=1,2,3,4,5,6,7$ and $8.$
    In Section 4, we have shown that color partition identities can be obtained from the theta-function identities by using continued fractions with suitable examples.
	
	\section{Theta-function identities}
	
	In this section, we establish theta-function and modular identities for the continued fractions $X_i(q)$ and $Y_i(q)$.
	
	\begin{theorem}
		We have
		\[
		\text{(a)}\quad 
		\frac{1}{X_1(q)} + X_1(q)
		= \frac{\phi(-q^{17/2})\, f(q^{1/2}, q^{33/2})}
		{q^{1/4}\psi(q^{17})\, f(-q^{8},-q^{9})}.
		\]
		\[
		\text{(b)}\quad 
		\frac{1}{X_1(q)} - X_1(q)
		= \frac{\phi(q^{17/2})\, f(-q^{1/2}, -q^{33/2})}
		{q^{1/4}\psi(q^{17})\, f(-q^{8},-q^{9})}.
		\]
		\[
		\text{(c)}\quad 
		\frac{1}{X_2(q)} + X_2(q)
		= \frac{\phi(-q^{17/2})\, f(q^{3/2}, q^{31/2})}
		{q^{3/4}\psi(q^{17})\, f(-q^{7},-q^{10})}.
		\]
		\[
		\text{(d)}\quad 
		\frac{1}{X_2(q)} - X_2(q)
		= \frac{\phi(q^{17/2})\, f(-q^{3/2},-q^{31/2})}
		{q^{3/4}\psi(q^{17})\, f(-q^{7},-q^{10})}.
		\]
		\[
		\text{(e)}\quad 
		\frac{1}{X_3(q)} + X_3(q)
		= \frac{\phi(-q^{17/2})\, f(q^{5/2}, q^{29/2})}
		{q^{5/4}\psi(q^{17})\, f(-q^{6},-q^{11})}.
		\]
		\[
		\text{(f)}\quad 
		\frac{1}{X_3(q)} - X_3(q)
		= \frac{\phi(q^{17/2})\, f(-q^{5/2},-q^{29/2})}
		{q^{5/4}\psi(q^{17})\, f(-q^{6},-q^{11})}.
		\]
		\[
		\text{(g)}\quad 
		\frac{1}{X_4(q)} + X_4(q)
		= \frac{\phi(-q^{17/2})\, f(q^{7/2}, q^{27/2})}
		{q^{7/4}\psi(q^{17})\, f(-q^{5},-q^{12})}.
		\]
		\[
		\text{(h)}\quad 
		\frac{1}{X_4(q)} - X_4(q)
		= \frac{\phi(q^{17/2})\, f(-q^{7/2},-q^{27/2})}
		{q^{7/4}\psi(q^{17})\, f(-q^{5},-q^{12})}.
		\]
		\[
		\text{(i)}\quad 
		\frac{1}{X_5(q)} + X_5(q)
		= \frac{\phi(-q^{17/2})\, f(q^{9/2}, q^{25/2})}
		{q^{9/4}\psi(q^{17})\, f(-q^{4},-q^{13})}.
		\]
		\[
		\text{(j)}\quad 
		\frac{1}{X_5(q)} - X_5(q)
		= \frac{\phi(q^{17/2})\, f(-q^{9/2},-q^{25/2})}
		{q^{9/4}\psi(q^{17})\, f(-q^{4},-q^{13})}.
		\]
		\[
		\text{(k)}\quad 
		\frac{1}{X_6(q)} + X_6(q)
		= \frac{\phi(-q^{17/2})\, f(q^{11/2}, q^{23/2})}
		{q^{11/4}\psi(q^{17})\, f(-q^{3},-q^{14})}.
		\]
		\[
		\text{(l)}\quad 
		\frac{1}{X_6(q)} - X_6(q)
		= \frac{\phi(q^{17/2})\, f(-q^{11/2},-q^{23/2})}
		{q^{11/4}\psi(q^{17})\, f(-q^{3},-q^{14})}.
		\]
		\[
		\text{(m)}\quad 
		\frac{1}{X_7(q)} + X_7(q)
		= \frac{\phi(-q^{17/2})\, f(q^{13/2}, q^{21/2})}
		{q^{13/4}\psi(q^{17})\, f(-q^{2},-q^{15})}.
		\]
		\[
		\text{(n)}\quad 
		\frac{1}{X_7(q)} - X_7(q)
		= \frac{\phi(q^{17/2})\, f(-q^{13/2},-q^{21/2})}
		{q^{13/4}\psi(q^{17})\, f(-q^{2},-q^{15})}.
		\]
		\[
		\text{(o)}\quad 
		\frac{1}{X_8(q)} + X_8(q)
		= \frac{\phi(-q^{17/2})\, f(q^{15/2}, q^{19/2})}
		{q^{15/4}\psi(q^{17})\, f(-q,-q^{16})}.
		\]
		\[
		\text{(p)}\quad 
		\frac{1}{X_8(q)} - X_8(q)
		= \frac{\phi(q^{17/2})\, f(-q^{15/2},-q^{19/2})}
		{q^{15/4}\psi(q^{17})\, f(-q,-q^{16})}.
		\]
		
	\end{theorem}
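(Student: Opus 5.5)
The plan is to reduce each part to a single application of Ramanujan's product formula for theta-functions \cite[p.~45, Entry~29]{Berndt}. For $ab=cd$ that formula reads
\[
f(a,b)f(c,d)+f(-a,-b)f(-c,-d)=2f(ac,bd)f(ad,bc),
\]
\[
f(a,b)f(c,d)-f(-a,-b)f(-c,-d)=2a\,f\!\left(\tfrac{b}{c},\tfrac{ac^2d}{1}\right)f\!\left(\tfrac{b}{d},acd^2\right).
\]
I give the argument for (a) and (b) in detail; parts (c)--(p) are identical with the exponent pair $(1/2,33/2)$ replaced by $(3/2,31/2),\dots,(15/2,19/2)$.

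\textbf{Step 1: clear denominators.} From (1.10),
\[
\frac{1}{X_1(q)}\pm X_1(q)=\frac{f(-q^9,-q^{25})^2\pm q^{1/2}f(-q^8,-q^{26})^2}{q^{1/4}f(-q^8,-q^{26})f(-q^9,-q^{25})}.
\]
By (1.3) the denominator product equals $(q^8,q^9,q^{25},q^{26};q^{34})_\infty(q^{34};q^{34})_\infty^2$. Since $(q^8,q^9,q^{25},q^{26};q^{34})_\infty=(q^8,q^9;q^{17})_\infty$, this becomes
\[
f(-q^8,-q^9)\,\frac{(q^{34};q^{34})_\infty^2}{(q^{17};q^{17})_\infty}=f(-q^8,-q^9)\,\psi(q^{17}),
\]
using (1.5) in the form $\psi(q^{17})=(q^{34};q^{34})_\infty/(q^{17};q^{34})_\infty$. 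This accounts for the factor $\psi(q^{17})f(-q^8,-q^9)$ in the statement. It remains to prove
\[
f(-q^9,-q^{25})^2\pm q^{1/2}f(-q^8,-q^{26})^2=\phi(\mp q^{17/2})\,f(\pm q^{1/2},\pm q^{33/2}).
\]

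\textbf{Step 2: apply the product formula with $c=d$.} Take $a=q^{1/2}$, $b=q^{33/2}$, $c=d=-q^{17/2}$, so that $ab=q^{17}=cd$.

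In the first formula we have $ac=-q^9$ and $bd=-q^{25}$, so the right side is $2f(-q^9,-q^{25})^2$.

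In the second formula we have $b/c=-q^8$ and $ac^2d=acd^2=-q^{26}$, so the right side is $2q^{1/2}f(-q^8,-q^{26})^2$.

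The left sides are
\[
f(q^{1/2},q^{33/2})\phi(-q^{17/2})\pm f(-q^{1/2},-q^{33/2})\phi(q^{17/2}).
\]
Adding and halving the two formulas gives
\[
f(-q^9,-q^{25})^2+q^{1/2}f(-q^8,-q^{26})^2=\phi(-q^{17/2})f(q^{1/2},q^{33/2}),
\]
which is the plus case and proves (a). Subtracting gives
\[
f(-q^9,-q^{25})^2-q^{1/2}f(-q^8,-q^{26})^2=\phi(q^{17/2})f(-q^{1/2},-q^{33/2}),
\]
which proves (b).

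\textbf{Step 3: the remaining parts.} For $X_k$, $k=2,\dots,8$, take $a=q^{(2k-1)/2}$, $b=q^{(35-2k)/2}$, $c=d=-q^{17/2}$. Then $ac=-q^{8+k}$, $bc=-q^{26-k}$, $b/c=-q^{9-k}$ and $ac^3=-q^{25+k}$. These reproduce exactly the theta-functions $f(-q^{8+k},-q^{26-k})$ and $f(-q^{9-k},-q^{25+k})$ appearing in $X_k$. The denominator factors $f(-q^{9-k},-q^{8+k})$ arise as in Step 1, because $\{9-k,8+k,25+k,26-k\}$ is again a union of two residue classes modulo $17$.

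\textbf{Expected obstacle.} The only delicate point is the bookkeeping in Step 2: the parameters must satisfy $ab=cd$, and the signs must be chosen so that the two products collapse to perfect squares with the correct signs inside $f$. The choice $c=d=-q^{17/2}$, rather than $+q^{17/2}$, is what produces the minus signs. I would also check the half-integer exponents against branch conventions, which is harmless here since everything is a power series in $q^{1/4}$.
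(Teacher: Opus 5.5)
Your proof is correct, but it follows a different route from the paper's.

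The paper works with $1/\sqrt{X_1(q)}\pm\sqrt{X_1(q)}$. It first uses the $2$-dissection $f(a,b)=f(a^3b,ab^3)+af(b/a,a^5b^3)$ (Entry 30(ii),(iii)) to identify $f(-q^9,-q^{25})\mp q^{1/4}f(-q^8,-q^{26})$ with the single theta functions $f(\mp q^{1/4},\pm q^{33/4})$. The minus case then comes from multiplying these two factors and applying $f(a,b)f(-a,-b)=f(-a^2,-b^2)\phi(-ab)$. The plus case needs a separate detour: square, expand $f^2(q^{1/4},-q^{33/4})$ by Entry 30(v),(vi), and cancel the resulting constant $2$ using $f(-q^8,-q^{26})f(-q^9,-q^{25})=f(-q^8,-q^9)\psi(q^{17})$. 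The paper gets that last identity from Entry 30(i), where you use the triple product directly.

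Entry 30(iv)--(vi) are just the specializations $(c,d)=\pm(a,b)$ of Entry 29, so both arguments rest on the same product formula. Your specialization $c=d=-q^{17/2}$ is the one that makes both right-hand sides perfect squares. A single application therefore gives the numerators $f(-q^9,-q^{25})^2\pm q^{1/2}f(-q^8,-q^{26})^2$ for both signs at once, with no square roots and no cancellation step. Your parametrization $a=q^{(2k-1)/2}$, $b=q^{(35-2k)/2}$ also actually proves (c)--(p), rather than asserting they are similar.

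What the paper's route gives in addition is the pair of single-quotient formulas for $1/\sqrt{X_1(q)}\pm\sqrt{X_1(q)}$, which yours does not produce. The paper's text also attaches the labels (a) and (b) to the wrong halves of its argument; your sign bookkeeping is the correct one.
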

	
	\begin{proof}
		Here, we consider $X_1(q)$ only. One can prove the remaining  $X_2(q)$, $X_3(q)$, $X_4(q)$, $X_5(q)$, $X_6(q)$, $X_7(q)$ and $X_8(q)$ similarly.
		Using (1.10), we see that
		\begin{equation}
			\frac{1}{\sqrt{X_1(q)}} - \sqrt{X_1(q)}
			=
			\frac{f(-q^{9},-q^{25})-q^{1/4}f(-q^{8},-q^{26})}
			{\sqrt{{q^{1/4}}\, f(-q^{8},-q^{26})f(-q^{9},-q^{25})}}.
			\tag{2.1}
		\end{equation}
		
		From \cite[p.~46]{Berndt}, Entry 30 (ii) and (iii), we have
		\begin{equation}
			f(a,b)=f(a^3b,ab^3)+af(b/a,a^5b^3).
			\tag{2.2}
		\end{equation}
		
		Taking $\{a=-q^{1/4},\, b=q^{33/4}\}$ and $\{a=q^{1/4},\, b=-q^{33/4}\}$ in (1.14), we obtain
		\begin{equation}
			f(-q^{1/4},q^{33/4})
			=
			f(-q^{9},-q^{25})-q^{1/4}f(-q^{8},-q^{26})
			\tag{2.3}
		\end{equation}

		and
		\begin{equation}
			f(q^{1/4},-q^{33/4})
			=
			f(-q^{9},-q^{25})+q^{1/4}f(-q^{8},-q^{26}),
			\tag{2.4}
		\end{equation}
		respectively.
		
		Applying (2.3) in (2.1), we find that
		\begin{equation}
			\frac{1}{\sqrt{X_1(q)}}-\sqrt{X_1(q)}
			=
			\frac{f(-q^{1/4},q^{33/4})}
			{\sqrt{{q^{1/4}}\, f(-q^{8},-q^{26})f(-q^{9},-q^{25})}}.
			\tag{2.5}
		\end{equation}
		
		Similarly, from (1.10) and applying (2.4), we deduce that
		\begin{equation}
			\frac{1}{\sqrt{X_1(q)}}+\sqrt{X_1(q)}
			=
			\frac{f(q^{1/4},-q^{33/4})}
			{\sqrt{{q^{1/4}}\, f(-q^{8},-q^{26})f(-q^{9},-q^{25})}}.
			\tag{2.6}
		\end{equation}
		
		Combining (2.5) and (2.6), we obtain
		\begin{equation}
			\frac{1}{X_1(q)}-X_1(q)
			=
			\frac{f(-q^{1/4},q^{33/4})f(q^{1/4},-q^{33/4})}
			{q^{1/4}f(-q^{8},-q^{26})f(-q^{9},-q^{25})}.
			\tag{2.7}
		\end{equation}
		
		Again, from \cite[p.~46]{Berndt}, Entry 30 (i) and (iv), we have
		\begin{equation}
			f(a,ab^2)f(b,a^2b)=f(a,b)\psi(ab)
			\tag{2.8}
		\end{equation}
		
		and
		\begin{equation}
			f(a,b)f(-a,-b)=f(-a^2,-b^2)\phi(-ab).
			\tag{2.9}
		\end{equation}
		
		Setting $\{a=-q^{8},b=-q^{9}\}$ in (2.
		8) and $\{a=-q^{1/4},b=q^{33/4}\}$ in (2.9), we obtain
		\begin{equation}
			f(-q^{8},-q^{26})f(-q^{9},-q^{25})
			=
			f(-q^{8},-q^{9})\psi(q^{17}).
			\tag{2.10}
		\end{equation}
		
		and 
		\begin{equation}
			f(-q^{1/4},q^{33/4})f(q^{1/4},-q^{33/4})=f(-q^{1/2},-q^{33/2})\phi(q^{17/2}).
			\tag{2.11}
		\end{equation}
		respectively. Employing (2.10) and (2.11) in (2.7), we complete the proof of (a).
		Squaring (2.6) we obtain
		\begin{equation}
			\frac{1}{X_1(q)}+X_1(q)
			=
			\frac{f^2(q^{1/4},-q^{33/4})}
			{q^{1/4}f(-q^{8},-q^{26})f(-q^{9},-q^{25})}-2.
			\tag{2.12}
		\end{equation}
		From \cite[p.~46]{Berndt}, Entry 30 (v) and (vi), we have
		\begin{equation}
			f^2(a,b)=f(a^2,b^2)\phi(ab)+2af(b/a,a^3b)\psi(a^2b^2).
			\tag{2.13}
		\end{equation}
		setting $\{a=q^{1/4},b=-q^{33/4}\}$, we obtain
		\begin{equation}
			f^2(q^{1/4},-q^{33/4})=f(q^{1/2},q^{33/2})\phi(-q^{17/2})+2q^{1/4}f(-q^{8},-q^{9})\psi(q^{17}).
			\tag{2.14}
		\end{equation}
		Employing (2.14) and (2.10) in (2.12) and simplifying, we arrive at (b).
		Proofs of (c)-(p) are identical to the proofs of (a) and (b), so we omit them.
	\end{proof} 
	The proof of Theorem $2.2$ is similar to the proof of Theorem $2.1$, so we simply state the theorem and omit the proof.

	\begin{theorem}
		We have
		\[
		\text{(a)}\quad 
		\frac{1}{Y_1(q)} + Y_1(q)
		= \frac{\phi(-q^{17})\, f(q^{2}, q^{32})}
		{q\psi(q^{34})\, f(-q^{15},-q^{19})}.
		\]
		\[
		\text{(b)}\quad 
		\frac{1}{Y_1(q)} - Y_1(q)
		= \frac{\phi(q^{17})\, f(-q^{2}, -q^{32})}
		{q\psi(q^{34})\, f(-q^{15},-q^{19})}.
		\]
		\[
		\text{(c)}\quad 
		\frac{1}{Y_2(q)} + Y_2(q)
		= \frac{\phi(-q^{17})\, f(q^{4}, q^{30})}
		{q^{2}\psi(q^{34})\, f(-q^{13},-q^{21})}.
		\]
		\[
		\text{(d)}\quad 
		\frac{1}{Y_2(q)} - Y_2(q)
		= \frac{\phi(q^{17})\, f(-q^{4}, -q^{30})}
		{q^{2}\psi(q^{34})\, f(-q^{13},-q^{21})}.
		\]
			\[
		\text{(e)}\quad 
		\frac{1}{Y_3(q)} + Y_3(q)
		= \frac{\phi(-q^{17})\, f(q^{6}, q^{28})}
		{q^{3}\psi(q^{34})\, f(-q^{11},-q^{23})}.
		\]
		\[
		\text{(f)}\quad 
		\frac{1}{Y_3(q)} - Y_3(q)
		= \frac{\phi(q^{17})\, f(-q^{6}, -q^{28})}
		{q^{3}\psi(q^{34})\, f(-q^{11},-q^{23})}.
		\]
		\[
		\text{(g)}\quad 
		\frac{1}{Y_4(q)} + Y_4(q)
		= \frac{\phi(-q^{17})\, f(q^{8}, q^{26})}
		{q^{4}\psi(q^{34})\, f(-q^{9},-q^{25})}.
		\]
		\[
		\text{(h)}\quad 
		\frac{1}{Y_4(q)} - Y_4(q)
		= \frac{\phi(q^{17})\, f(-q^{8}, -q^{26})}
		{q^{4}\psi(q^{34})\, f(-q^{9},-q^{25})}.
		\]
		\[
		\text{(i)}\quad 
		\frac{1}{Y_5(q)} + Y_5(q)
		= \frac{\phi(-q^{17})\, f(q^{10}, q^{24})}
		{q^{5}\psi(q^{34})\, f(-q^{7},-q^{27})}.
		\]
		\[
		\text{(j)}\quad 
		\frac{1}{Y_5(q)} - Y_5(q)
		= \frac{\phi(q^{17})\, f(-q^{10}, -q^{24})}
		{q^{5}\psi(q^{34})\, f(-q^{7},-q^{27})}.
		\]
		\[
		\text{(k)}\quad 
		\frac{1}{Y_6(q)} + Y_6(q)
		= \frac{\phi(-q^{17})\, f(q^{12}, q^{22})}
		{q^{6}\psi(q^{34})\, f(-q^{5},-q^{29})}.
		\]
		\[
		\text{(l)}\quad 
		\frac{1}{Y_6(q)} - Y_6(q)
		= \frac{\phi(q^{17})\, f(-q^{12}, -q^{22})}
		{q^{6}\psi(q^{34})\, f(-q^{5},-q^{29})}.
		\]
		\[
		\text{(m)}\quad 
		\frac{1}{Y_7(q)} + Y_7(q)
		= \frac{\phi(-q^{17})\, f(q^{14}, q^{20})}
		{q^{7}\psi(q^{34})\, f(-q^{3},-q^{31})}.
		\]
		\[
		\text{(n)}\quad 
		\frac{1}{Y_7(q)} - Y_7(q)
		= \frac{\phi(q^{17})\, f(-q^{14}, -q^{20})}
		{q^{7}\psi(q^{34})\, f(-q^{3},-q^{31})}.
		\]
		\[
		\text{(o)}\quad 
		\frac{1}{Y_8(q)} + Y_8(q)
		= \frac{\phi(-q^{17})\, f(q^{16}, q^{18})}
		{q^{8}\psi(q^{34})\, f(-q,-q^{33})}.
		\]
		\[
		\text{(p)}\quad 
		\frac{1}{Y_8(q)} - Y_8(q)
		= \frac{\phi(q^{17})\, f(-q^{16}, -q^{18})}
		{q^{8}\psi(q^{34})\, f(-q,-q^{33})}.
		\]
	\end{theorem}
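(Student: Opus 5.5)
The plan is to follow the proof of Theorem 2.1 step by step, working with the base $q^{17}$ instead of $q^{17/2}$. I treat $Y_1(q)$ in detail. The other seven cases use the same argument with $a=q^{k}$, $b=-q^{17-k}$ for $k=2,\dots,8$.

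\textbf{Step 1: dissect the half-power combinations.} From (1.18),
\[
\frac{1}{\sqrt{Y_1(q)}}\mp\sqrt{Y_1(q)}=\frac{f(-q^{19},-q^{49})\mp q\,f(-q^{15},-q^{53})}{\sqrt{q\,f(-q^{15},-q^{53})f(-q^{19},-q^{49})}}.
\]
Apply (2.2) with $a=q$, $b=-q^{16}$. Then $a^3b=-q^{19}$, $ab^3=-q^{49}$, $b/a=-q^{15}$ and $a^5b^3=-q^{53}$, so
\[
f(q,-q^{16})=f(-q^{19},-q^{49})+q\,f(-q^{15},-q^{53}).
\]
Replacing $q$ by $-q$ (equivalently $a=-q$, $b=q^{16}$) gives
\[
f(-q,q^{16})=f(-q^{19},-q^{49})-q\,f(-q^{15},-q^{53}).
\]
These are the analogues of (2.3) and (2.4). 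Multiplying the two resulting expressions for $1/\sqrt{Y_1}\mp\sqrt{Y_1}$ gives
\[
\frac{1}{Y_1(q)}-Y_1(q)=\frac{f(-q,q^{16})f(q,-q^{16})}{q\,f(-q^{15},-q^{53})f(-q^{19},-q^{49})}.
\]

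\textbf{Step 2: simplify the products.} Use (2.8) with $a=-q^{15}$, $b=-q^{19}$. Here $ab^2=-q^{53}$, $a^2b=-q^{49}$ and $ab=q^{34}$, so the denominator product equals $f(-q^{15},-q^{19})\psi(q^{34})$.

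Use (2.9) with $a=q$, $b=-q^{16}$. Here $a^2=q^2$, $b^2=q^{32}$ and $-ab=q^{17}$, so the numerator equals $f(-q^{2},-q^{32})\phi(q^{17})$.

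Substituting both gives the formula for $1/Y_1-Y_1$, which is part (b).

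\textbf{Step 3: the sum $1/Y_1+Y_1$.} Square the "$+$" expression, noting that $\bigl(1/\sqrt{Y_1}+\sqrt{Y_1}\bigr)^2=1/Y_1+Y_1+2$. This gives
\[
\frac{1}{Y_1}+Y_1=\frac{f^2(q,-q^{16})}{q\,f(-q^{15},-q^{19})\psi(q^{34})}-2.
\]
Apply (2.13) with $a=q$, $b=-q^{16}$, which gives
\[
f^2(q,-q^{16})=f(q^2,q^{32})\phi(-q^{17})+2q\,f(-q^{15},-q^{19})\psi(q^{34}).
\]
The second term divided by the denominator is exactly $2$, which cancels the $-2$ and yields part (a).

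\textbf{Remaining parts and main obstacle.} For $Y_k$ the same choices $a=q^k$, $b=-q^{17-k}$ give the following, by the analogous checks:
\[
a^3b=-q^{17+2k},\qquad ab^3=-q^{51-2k},\qquad b/a=-q^{17-2k},\qquad a^5b^3=-q^{51+2k},
\]
\[
a^2=q^{2k},\qquad b^2=q^{34-2k},\qquad ab=-q^{17}.
\]
These match the products in (1.19)–(1.25) and the right-hand sides in parts (c)–(p).

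The only real obstacle is this bookkeeping of exponents and signs. In particular, one must choose which of $a,b$ carries the minus sign so that the term $a f(b/a,a^5b^3)$ reproduces the numerator of $Y_k$ with the correct power $q^k$. One must also confirm that the cross term in (2.13) collapses exactly to $2$.
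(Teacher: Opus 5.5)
Your proof is correct and is exactly the adaptation of the proof of Theorem 2.1 that the paper leaves to the reader for Theorem 2.2. You also attach the product identity to (b) and the squared identity to (a) correctly, whereas the paper's written proof of Theorem 2.1 swaps those two labels.

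One small slip: replacing $q$ by $-q$ in $f(q,-q^{16})$ gives $f(-q,-q^{16})$, not $f(-q,q^{16})$, because $16$ is even. In the $X_1$ case the analogous move $q^{1/4}\mapsto -q^{1/4}$ does work, since $33$ is odd. Substituting $a=-q$, $b=q^{16}$ directly into (2.2) gives exactly the identity you state, and likewise $a=-q^k$, $b=q^{17-k}$ for general $k$.
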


	\begin{theorem}
		For any non-negative integer $n$, we have for $i=1, 2, 3, 4, 5, 6, 7$ and $8$:
		\[
		\text{(a)}\quad
		X_i^{n}(q)\,X_i^{n}(-q)
		=
		(-1)^{\frac{n}{4}}\,X_i^{n}(q^2),
		\quad \text{if } n \equiv 0 \pmod{4},
		\]
		\[
		\text{where }\;
		(-1)^{\frac{n}{4}}
		=
		\begin{cases}
			 1, & \text{if } n \equiv 0 \pmod{8},\\[4pt]
			-1, & \text{if } n \equiv 4 \pmod{8}.
		\end{cases}
		\]
	\end{theorem}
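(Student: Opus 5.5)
The plan is to separate each $X_i(q)$ into a fractional power of $q$ times a pure $q$-product, and to handle the two factors separately. From (1.10)--(1.17) we have
\[
X_i(q)=q^{(2i-1)/4}\,F_i(q),\qquad F_i(q)=\frac{(q^{a},q^{34-a};q^{34})_\infty}{(q^{b},q^{34-b};q^{34})_\infty},
\]
with $a=9-i$ and $b=8+i$, so that $a+b=17$. For $n\equiv 0\pmod 4$ we can write $X_i^n(q)=q^{n(2i-1)/4}F_i^n(q)$. Here $n(2i-1)/4$ is an integer, so raising to the $n$-th power removes every ambiguity in the choice of branch. This is exactly why the statement restricts to $n\equiv 0\pmod 4$.

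\textbf{Step 1: the monomial prefactors.} Replacing $q$ by $-q$ gives the prefactor $(-q)^{n(2i-1)/4}=(-1)^{n(2i-1)/4}q^{n(2i-1)/4}$. Since $2i-1$ is odd and $n/4$ is an integer, $(-1)^{n(2i-1)/4}=(-1)^{n/4}$. This sign is $1$ when $n\equiv 0\pmod 8$ and $-1$ when $n\equiv 4\pmod 8$, which matches the case split in the statement. The product of the two prefactors is then $(-1)^{n/4}q^{n(2i-1)/2}=(-1)^{n/4}(q^2)^{n(2i-1)/4}$. This is precisely the prefactor of $X_i^n(q^2)$.

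\textbf{Step 2: the product parts.} It remains to compare $F_i(q)F_i(-q)$ with $F_i(q^2)$ and then raise the result to the $n$-th power. Because $34$ is even, $(-q)^{34}=q^{34}$. Hence each factor $(q^{c};q^{34})_\infty$ becomes $((-1)^c q^c;q^{34})_\infty$ under $q\mapsto -q$. For odd $c$ we then use the elementary identity $(\lambda;q)_\infty(-\lambda;q)_\infty=(\lambda^2;q^2)_\infty$ to get
\[
(q^c;q^{34})_\infty(-q^c;q^{34})_\infty=(q^{2c};q^{68})_\infty .
\]
For even $c$ the two factors coincide, and the product is $(q^c;q^{34})_\infty^2$. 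We would apply this factor by factor to the numerator pair $(a,34-a)$ and the denominator pair $(b,34-b)$. We would then try to reassemble the result into $F_i(q^2)$, using the dissection $(q^{c};q^{34})_\infty=(q^{c},q^{c+34};q^{68})_\infty$ together with Jacobi's triple product (1.3) to rewrite everything in base $q^{68}$.

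\textbf{Main obstacle.} Step 2 is where I expect all the difficulty to lie. Since $a+b=17$ is odd, exactly one of $a,b$ is even, so the numerator and denominator do not transform in parallel. Step 1 is routine. In Step 2 I would first test the case $i=1$ to low order in $q$, to see whether the even-index factors really recombine into the base-$68$ product that $F_i(q^2)$ requires. If they do not, the argument has to be re-examined at exactly this point. The likely remedy would be an additional theta-function relation in the style of Entry 30 of Berndt, namely (2.2), (2.8) and (2.9), to convert the squared base-$34$ factors.
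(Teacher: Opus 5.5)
Your Step~1 is correct, but Step~2 cannot be completed, and no relation from Entry~30 will rescue it. The identity $F_i(q)F_i(-q)=F_i(q^2)$ that you need is false. So the statement itself fails for every $i$ and every $n\equiv 0\pmod 4$ with $n\ge 4$; it holds only trivially for $n=0$.

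Your parity observation already contains the counterexample once you push the computation one line further. For $i$ odd, $a=9-i$ is even and $b=8+i$ is odd. So the numerator of $F_i$ is invariant under $q\mapsto -q$, while the denominator pairs up into the denominator of $F_i(q^2)$, and
\[
\frac{F_i(q)F_i(-q)}{F_i(q^2)}
=\frac{(q^{a},q^{34-a};q^{34})_\infty^{2}}{(q^{2a},q^{68-2a};q^{68})_\infty}
=\frac{(q^{a},q^{34-a};q^{34})_\infty}{(-q^{a},-q^{34-a};q^{34})_\infty}
=1-2q^{a}+O(q^{2a}).
\]
For $i$ even the same computation gives $(-q^{b},-q^{34-b};q^{34})_\infty/(q^{b},q^{34-b};q^{34})_\infty=1+2q^{b}+\cdots$. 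Taking $n$-th powers does not help, since these become $1-2nq^{a}+\cdots$ and $1+2nq^{b}+\cdots$ respectively.

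Concretely, for $i=1$ you get $F_1(q)F_1(-q)=1-2q^{8}+q^{16}+O(q^{18})$, while $F_1(q^2)=1-q^{16}+O(q^{18})$. Hence $X_1^4(q)X_1^4(-q)=-q^{2}(1-8q^{8}+\cdots)$, but $-X_1^4(q^2)=-q^{2}\bigl(1+O(q^{16})\bigr)$.

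The paper's proof reaches the stated identity only through an error at exactly the point you flagged. In (2.15) it writes the numerator of $X_1(-q)$ as $f(q^{8},q^{26})$, whereas $f(-(-q)^{8},-(-q)^{26})=f(-q^{8},-q^{26})$. That wrong substitution is what lets (2.17) be applied so that $\phi(-q^{34})$ cancels against the factor coming from (2.16). Done correctly, (2.16) and (2.17) give, for $n\equiv 0\pmod 4$,
\[
X_1^{n}(q)X_1^{n}(-q)=(-1)^{n/4}X_1^{n}(q^2)\,\frac{f^{n}(-q^{8},-q^{26})}{f^{n}(q^{8},q^{26})},
\]
and analogously for the other $i$.

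The pairing $(\lambda;q)_\infty(-\lambda;q)_\infty=(\lambda^2;q^2)_\infty$ (equivalently (2.9)) closes up only when all four residues $a,34-a,b,34-b$ are odd. That is impossible here because $a+b=17$. So the right outcome of your proposed low-order test is not a search for a further theta-function relation. It is the conclusion that the theorem as stated is false and must carry the correction factor above.
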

	
	\begin{proof}
		Here, we consider the only case $i=1$. One can prove the remaining cases similarly.
		Using (1.10), we see that
		\begin{equation}
			X_1^{n}(q)X_1^{n}(-q)
			=
			(-1)^{n/4} q^{n/2}
			\frac{f^n(-q^{8},-q^{26})}{f^n(-q^{9},-q^{25})}
			\cdot
			\frac{f^n(q^{8},q^{26})}{f^n(q^{9},q^{25})}.
			\tag{2.15}
		\end{equation}
		
		Setting $\{a=q^{9},\, b=q^{25}\}$ and $\{a=q^{8},\, b=q^{26}\}$ in (2.9), we find that
		\begin{equation}
			f(q^{9},q^{25})f(-q^{9},-q^{25})=f(-q^{18},-q^{50})\phi(-q^{34}),
			\tag{2.16}
		\end{equation}
		and
		\begin{equation}
			f(q^{8},q^{26})f(-q^{8},-q^{26})=f(-q^{16},-q^{52})\phi(-q^{34}),
			\tag{2.17}
		\end{equation}
		respectively.
		
		Employing (2.16) and (2.17) in (2.15), we obtain
		\begin{equation}
			X_1^{n}(q)X_1^{n}(-q)
			=
			(-1)^{n/4} q^{n/2}
			\frac{f^n(-q^{16},-q^{52})}{f^n(-q^{18},-q^{50})}
			=
			(-1)^{n/4} X_1^{n}(q^2).
			\tag{2.18}
		\end{equation}
		
		Noting that $n/4$ is even when $n \equiv 0 \pmod{8}$ and odd when $n \equiv 4 \pmod{8}$ in (2.18), the proof of part (a) for $i=1$ is complete. The cases $i=2,3,4,5,6,7$ and $8$ follow similarly, and hence their proofs are omitted.
	\end{proof}
	
	The proof of Theorem $2.4$ is similar to the proof of Theorem $2.3$, so we simply state the theorem and omit the proof.

	\begin{theorem}
		For any non-negative integer $n$, we have for $i=1,2,3,4,5,6,7$ and $8$:
		\[
		\text{(a)}\quad
		Y_i^{n}(q)Y_i^{n}(-q)=
		\begin{cases}
			Y_i^{n}(q^2), & \text{if } n \equiv 0 \pmod{2},\\
			-\,Y_i^{n}(q^2), & \text{if } n \equiv 1 \pmod{2}.
		\end{cases}
		\]
	\end{theorem}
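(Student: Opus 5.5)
We follow the proof of Theorem~2.3 and treat $i=1$ in detail. Write
\[
Y_1(q)=q\,\frac{f(-q^{15},-q^{53})}{f(-q^{19},-q^{49})}
\]
from (1.18), and replace $q$ by $-q$. Since $15,53,19,49$ are all odd, $f(-q^{15},-q^{53})$ becomes $f(q^{15},q^{53})$, and likewise for the denominator. The prefactor becomes $-q$. Hence
\[
Y_1^{n}(q)\,Y_1^{n}(-q)=(-1)^{n}q^{2n}\,
\frac{f^{n}(-q^{15},-q^{53})\,f^{n}(q^{15},q^{53})}{f^{n}(-q^{19},-q^{49})\,f^{n}(q^{19},q^{49})}.
\]
This is the analogue of (2.15).

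Next we apply (2.9) with $\{a=q^{15},b=q^{53}\}$ and with $\{a=q^{19},b=q^{49}\}$. In both cases $ab=q^{68}$, so
\[
f(q^{15},q^{53})f(-q^{15},-q^{53})=f(-q^{30},-q^{106})\phi(-q^{68}),
\]
and similarly
\[
f(q^{19},q^{49})f(-q^{19},-q^{49})=f(-q^{38},-q^{98})\phi(-q^{68}).
\]
The common factor $\phi^{n}(-q^{68})$ cancels. We are left with
\[
(-1)^{n}\left(q^{2}\,\frac{f(-q^{30},-q^{106})}{f(-q^{38},-q^{98})}\right)^{n}=(-1)^{n}Y_1^{n}(q^{2}),
\]
by (1.18) with $q\to q^{2}$. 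Separating $n$ even and $n$ odd gives the two cases of the statement for $i=1$.

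The same two-step computation applies to each $i$. By (1.18)--(1.25), $Y_i(q)=q^{i}f(-q^{\alpha},-q^{68-\alpha})/f(-q^{\beta},-q^{68-\beta})$ with $\alpha,\beta$ odd. Hence the theta-quotient part transforms exactly as above via (2.9), and the doubled arguments match $Y_i(q^{2})$. The only $i$-dependent input is the prefactor, which gives $(-q)^{in}q^{in}=(-1)^{in}q^{2in}$.

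The main obstacle is this sign bookkeeping. For odd $i$ ($i=1,3,5,7$), $(-1)^{in}=(-1)^{n}$, which is exactly the stated sign. For even $i$, the computation yields $(-1)^{in}=+1$. Therefore the argument above establishes the stated dichotomy for $i=1,3,5,7$, but for $i=2,4,6,8$ it gives $+Y_i^{n}(q^{2})$ also when $n$ is odd. The statement as worded can hold for even $i$ only if this prefactor computation is somehow wrong, so that is the step to recheck.

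Before claiming the stated case $-Y_i^{n}(q^{2})$ for even $i$ and odd $n$, we would verify it numerically by comparing leading $q$-expansion coefficients. For example, the leading term of $Y_2(q)Y_2(-q)$ is $q^{4}$, which equals the leading term of $+Y_2(q^{2})$, not of $-Y_2(q^{2})$. If the check confirms this, the result for even $i$ should be recorded with the sign $(-1)^{in}$.
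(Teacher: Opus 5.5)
Your computation is correct and follows the route the paper intends. The paper omits the proof as ``similar to Theorem 2.3'': substitute $q\to -q$, apply (2.9) to numerator and denominator, and cancel $\phi^{n}(-q^{68})$. Your sign analysis is also right: all the exponents $17\pm 2i$ and $51\pm 2i$ are odd, so the only sign comes from $(-q)^{i}$, and this gives $Y_i^{n}(q)Y_i^{n}(-q)=(-1)^{in}Y_i^{n}(q^{2})$.

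You can state the conclusion without hedging, because there is nothing left to recheck. Your leading-coefficient comparison for $i=2$, $n=1$ ($+q^{4}$ for $Y_2(q)Y_2(-q)$ against $-q^{4}$ for $-Y_2(q^{2})$) is already a complete counterexample. So the statement as printed holds for $i=1,3,5,7$, and for even $i$ only when $n$ is even. It is false for $i=2,4,6,8$ with $n$ odd, and the paper's omitted ``similar'' proof does not catch this.
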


	\section{Vanishing coefficient in the series expansion}
	\begin{theorem}
		If
		\[
			X_1^*(q) = q^{-1/4} X_1(q)
			= \frac{(q^8, q^{26}; q^{34})_\infty}{(q^9, q^{25}; q^{34})_\infty}
			= \sum_{n=0}^{\infty} \xi_n q^n,
			\]
			Then we have
			\[
			\xi_{17n+6} = 0.
			\]
	\end{theorem}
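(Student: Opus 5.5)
The plan is to rewrite $X_1^*(q)$ as a function of $q^{17}$ times a $q$-series whose exponents, read modulo $17$, never hit the class $6$. The first step is to clear the awkward denominator. Taking $\{a=-q^8,\,b=-q^9\}$ in (2.8), as in (2.10), gives
\[
f(-q^{8},-q^{26})f(-q^{9},-q^{25})=f(-q^{8},-q^{9})\psi(q^{17}),
\]
so that
\[
X_1^*(q)=\frac{f^2(-q^{8},-q^{26})}{\psi(q^{17})\,f(-q^{8},-q^{9})},
\qquad f(-q^8,-q^9)=(q^8,q^9,q^{17};q^{17})_\infty .
\]
The factors $\psi(q^{17})$ and $(q^{17};q^{17})_\infty$ are already functions of $q^{17}$, so they do not affect which residue classes modulo $17$ occur. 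What remains is
\[
\frac{f^2(-q^{8},-q^{26})}{(q^{8},q^{9};q^{17})_\infty}.
\]

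Next I would try to turn $1/(q^8,q^9;q^{17})_\infty$ into a theta quotient whose denominator depends only on $q^{17}$ (or $q^{34}$). For this I would use the splittings $(q^8;q^{17})_\infty=(q^8,q^{25};q^{34})_\infty$ and $(q^9;q^{17})_\infty=(q^9,q^{26};q^{34})_\infty$, together with the product relation (2.9). If that fails, I would fall back on the quintuple product identity: $f(-q^8,-q^{26})f(-q^{9},-q^{25})$ combined with $f(-q^{8},-q^{9})$ is exactly the shape appearing in quintuple product expansions at modulus $17$. Either route should leave $X_1^*(q)$ as $F(q^{17})$ times a single theta series or a product of two theta series.

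Then I would expand the numerator. Write
\[
f(-q^8,-q^{26})=\sum_{m}(-1)^m q^{17m^2-9m},
\]
and expand the second theta factor similarly, with summation index $k$. The total exponent is $17(m^2+k^2)-9m-8k$ up to a shift, and modulo $17$ it reduces to a linear form such as $8(m-k)$ or $8(m+k)+c$. The key observation to verify is that, after the $q^{17}$-factors are removed, the residue $6$ is unattainable. Failing that, I would check that the terms with exponent $\equiv 6 \pmod{17}$ pair off with opposite signs under an involution such as $(m,k)\mapsto(k-1,m+1)$ or $m\mapsto -m+c$, so that their coefficients cancel. Extracting the $17n+6$ component then gives zero, which is $\xi_{17n+6}=0$.

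The main obstacle is the second step. The denominator $(q^8,q^9;q^{17})_\infty$ is not a function of $q^{17}$, so naive dissection does not pass through it. The whole argument hinges on finding the right multiplier, via (2.9) or the quintuple product, that makes the denominator $17$-invariant. Before committing to a particular choice, I would sanity-check it by expanding $X_1^*(q)$ numerically to about $q^{60}$ and confirming that $\xi_6=\xi_{23}=\xi_{40}=\xi_{57}=0$.
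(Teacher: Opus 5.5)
Your proposal has a genuine gap: Step~2 is not a technical detail but the entire proof, and you never supply it. Step~1 is correct but only rearranges $X_1^*$. The splittings $(q^8;q^{17})_\infty=(q^8,q^{25};q^{34})_\infty$ and $(q^9;q^{17})_\infty=(q^9,q^{26};q^{34})_\infty$ just reassemble the original quotient. Identity (2.9) with $a=q^8$, $b=q^9$ trades $1/f(-q^8,-q^9)$ for $f(q^8,q^9)/\bigl(f(-q^{16},-q^{18})\phi(-q^{17})\bigr)$. Iterating only doubles the exponents, and since $2^k\cdot 8\not\equiv 0\pmod{17}$ the denominator never becomes a function of $q^{17}$. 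The quintuple product identity is a product-to-series expansion and gives no way to cancel $1/(q^8,q^9;q^{17})_\infty$ either.

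Step~3 cannot work in its primary form. One of your factors is $f(-q^8,-q^{26})=\sum_m(-1)^mq^{17m^2-9m}$, whose exponents are $\equiv 8m\pmod{17}$ and so run through every residue class. Multiplying by any other series still produces exponents $\equiv 6\pmod{17}$. The vanishing must therefore come from cancellation, and you never produce the series in which that cancellation happens. Your values $\xi_6=\xi_{23}=\xi_{40}=\xi_{57}=0$ are right, but they are only a check.

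The missing idea is Ramanujan's ${}_1\psi_1$ summation,
$\sum_{n}t^n/(1-aQ^n)=(Q,Q,at,Q/(at);Q)_\infty/(a,Q/a,t,Q/t;Q)_\infty$. With $Q=q^{34}$, $a=q^{17}$, $t=q^9$ it gives
\[
\frac{(q^{34};q^{34})_\infty^2}{(q^{17};q^{34})_\infty^2}\,X_1^*(q)=\frac{(q^{34},q^{34},q^{26},q^{8};q^{34})_\infty}{(q^{17},q^{17},q^{9},q^{25};q^{34})_\infty}=\sum_{n=-\infty}^{\infty}\frac{q^{9n}}{1-q^{34n+17}} .
\]
This is exactly the ``$17$-invariant denominator'' you were after: the multiplier on the left and every denominator on the right are functions of $q^{17}$.

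All exponents arising from the $n$-th term are $\equiv 9n\pmod{17}$. So the residue-$6$ component is the subsum over $n=12+17s$, namely $q^{108}\sum_{s=-\infty}^{\infty}q^{153s}/(1-q^{425+578s})$. This is again a ${}_1\psi_1$ sum, with $Q=q^{578}$, $a=q^{425}$, $t=q^{153}$. Since $at=Q$, its product side contains $(1;q^{578})_\infty=0$.

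Equivalently, expand geometrically. The part with $s\ge 0$ is $\sum_{s,m\ge 0}q^{108+153s+(425+578s)m}$, and the part with $s=-1-s'<0$ is $-\sum_{s'\ge 0,\,m'\ge 1}q^{-45-153s'+(153+578s')m'}$. The bijection $(s,m)\mapsto(s',m')=(m,s+1)$ cancels them term by term. So your involution instinct is sound, but it applies to this Lambert series rather than to a product of theta series.

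This is precisely the paper's proof: the Andrews--Bressoud dissection (3.1) with $x=34$, $y=9$, $z=17$, $w=17$. There the $j=12$ term is the only one in the class $9j\equiv 6\pmod{17}$, and it carries the factor $(q^{0};q^{578})_\infty=0$.
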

	\begin{proof}
			
			Andrews and Bressoud \cite{Andrews} stated the following $p$-dissection formula:
			\[
			\frac{(q^x, q^x, q^{y+z}, q^{x-y-z}; q^x)_\infty}
			{(q^z, q^{x-z}, q^y, q^{x-y}; q^x)_\infty}
			=
			\sum_{j=0}^{w-1}
			q^{jy}
			\frac{(q^{wx}, q^{wx}, q^{wy+z+jx}, q^{(w-j)x-wy-z}; q^{wx})_\infty}
			{(q^{jx+z}, q^{(w-j)x-z}, q^{wy}, q^{(x-y)w}; q^{wx})_\infty}.
			\tag{3.1}
			\]
		where all of the powers of $q$ in each of the infinite products on the right-hand side must be multiples of $w$, and the integer $y$ must satisfy $\gcd(y, w) = 1$.
		\[
		\text{Substituting } x = 34,\; y = 9,\; z = 17,\; \text{and } w = 17 \text{ into (4.1), we obtain}
		\]
		\[
		\begin{aligned}
			\frac{(q^{34}, q^{34}, q^{26}, q^{8}; q^{34})_{\infty}}
			{(q^{17}, q^{17}, q^{9}, q^{25}; q^{34})_{\infty}}
			&= \frac{(q^{578}, q^{578}, q^{170}, q^{408}; q^{578})_{\infty}}
			{(q^{17}, q^{561}, q^{153}, q^{425}; q^{578})_{\infty}}
			+ q^{9}\frac{(q^{578}, q^{578}, q^{204}, q^{374}; q^{578})_{\infty}}
			{(q^{51}, q^{527}, q^{153}, q^{425}; q^{578})_{\infty}} \\
			&\quad + q^{18}\frac{(q^{578}, q^{578}, q^{238}, q^{340}; q^{578})_{\infty}}
			{(q^{85}, q^{493}, q^{153}, q^{425}; q^{578})_{\infty}}
			+ q^{27}\frac{(q^{578}, q^{578}, q^{272}, q^{306}; q^{578})_{\infty}}
			{(q^{119}, q^{459}, q^{153}, q^{425}; q^{578})_{\infty}} \\
			&\quad + q^{36}\frac{(q^{578}, q^{578}, q^{306}, q^{272}; q^{578})_{\infty}}
			{(q^{153}, q^{425}, q^{153}, q^{425}; q^{578})_{\infty}}
			+ q^{45}\frac{(q^{578}, q^{578}, q^{340}, q^{238}; q^{578})_{\infty}}
			{(q^{187}, q^{391}, q^{153}, q^{425}; q^{578})_{\infty}} \\
			&\quad + q^{54}\frac{(q^{578}, q^{578}, q^{374}, q^{204}; q^{578})_{\infty}}
			{(q^{221}, q^{357}, q^{153}, q^{425}; q^{578})_{\infty}}
			+ q^{63}\frac{(q^{578}, q^{578}, q^{408}, q^{170}; q^{578})_{\infty}}
			{(q^{255}, q^{323}, q^{153}, q^{425}; q^{578})_{\infty}} \\
			&\quad + q^{72}\frac{(q^{578}, q^{578}, q^{442}, q^{136}; q^{578})_{\infty}}
			{(q^{289}, q^{289}, q^{153}, q^{425}; q^{578})_{\infty}}
			+ q^{81}\frac{(q^{578}, q^{578}, q^{476}, q^{102}; q^{578})_{\infty}}
			{(q^{323}, q^{255}, q^{153}, q^{425}; q^{578})_{\infty}} \\
			&\quad + q^{90}\frac{(q^{578}, q^{578}, q^{510}, q^{68}; q^{578})_{\infty}}
			{(q^{357}, q^{221}, q^{153}, q^{425}; q^{578})_{\infty}}
			+ q^{99}\frac{(q^{578}, q^{578}, q^{544}, q^{34}; q^{578})_{\infty}}
			{(q^{391}, q^{187}, q^{153}, q^{425}; q^{578})_{\infty}} \\
			&\quad + q^{108}\frac{(q^{578}, q^{578}, q^{578}, q^{0}; q^{578})_{\infty}}
			{(q^{425}, q^{153}, q^{153}, q^{425}; q^{578})_{\infty}}
			+ q^{117}\frac{(q^{578}, q^{578}, q^{612}, q^{-34}; q^{578})_{\infty}}
			{(q^{459}, q^{119}, q^{153}, q^{425}; q^{578})_{\infty}} \\
			&\quad + q^{126}\frac{(q^{578}, q^{578}, q^{646}, q^{-68}; q^{578})_{\infty}}
			{(q^{493}, q^{85}, q^{153}, q^{425}; q^{578})_{\infty}}
			+ q^{135}\frac{(q^{578}, q^{578}, q^{680}, q^{-102}; q^{578})_{\infty}}
			{(q^{527}, q^{51}, q^{153}, q^{425}; q^{578})_{\infty}}\\
			&\quad +  q^{144}\frac{(q^{578}, q^{578}, q^{714}, q^{-136}; q^{578})_{\infty}}
			{(q^{561}, q^{17}, q^{153}, q^{425}; q^{578})_{\infty}}
		\end{aligned}
		\tag{3.2}
		\]

	\[
	\text{Multiplying both sides of  equation(3.2) by }
	{(q^{17}; q^{34})_{\infty}^{2}}/{(q^{34}; q^{34})_{\infty}^{2}}
	\text{ and then simplifying, we obtain}
	\]
	
	\[
	\sum_{n=0}^{\infty} \xi_n q^n
	=
	\left(
	\begin{aligned}
		&\frac{(q^{17}, q^{153}, q^{425}, q^{561}; q^{578})_{\infty}}
		{(q^{170}, q^{408}; q^{578})_{\infty}} \\
		&\qquad \times
		\frac{
			(q^{51}, q^{85}, q^{119}, q^{187}, q^{221}, q^{255}, q^{289}, q^{323}, q^{357}, q^{391}, q^{459}, q^{493}, q^{527}; q^{578})_{\infty}^{2}
		}{
			(q^{34}, q^{68}, q^{102}, q^{136}, q^{204}, q^{238}, q^{272}, q^{306}, q^{340}, q^{374}, q^{442}, q^{476}, q^{510}, q^{544}; q^{578})_{\infty}^{2}
		}
	\end{aligned}
	\right)
	\]
	
	\[
	\quad + q^{9}\left(
	\begin{aligned}
		&\frac{(q^{51}, q^{153}, q^{425}, q^{527}; q^{578})_{\infty}}
		{(q^{204}, q^{374}; q^{578})_{\infty}} \\
		&\qquad \times
		\frac{
			(q^{17}, q^{85}, q^{119}, q^{187}, q^{221}, q^{255}, q^{289}, q^{323}, q^{357}, q^{391}, q^{459}, q^{493}, q^{561}; q^{578})_{\infty}^{2}
		}{
			(q^{34}, q^{68}, q^{102}, q^{136}, q^{170}, q^{238}, q^{272}, q^{306}, q^{340}, q^{408}, q^{442}, q^{476}, q^{510}, q^{544}; q^{578})_{\infty}^{2}
		}
	\end{aligned}
	\right)
	\]
	
	\[
	\quad +q^{18}\left(
	\begin{aligned}
		&\frac{(q^{85}, q^{153}, q^{425}, q^{493}; q^{578})_{\infty}}
		{(q^{238}, q^{340}; q^{578})_{\infty}} \\
		&\qquad \times
		\frac{
			(q^{17}, q^{51}, q^{119}, q^{187}, q^{221}, q^{255}, q^{289}, q^{323}, q^{357}, q^{391}, q^{459}, q^{527}, q^{561}; q^{578})_{\infty}^{2}
		}{
		(q^{34}, q^{68}, q^{102}, q^{136}, q^{170},  q^{204}, q^{272}, q^{306}, q^{374}, q^{408}, q^{442}, q^{476}, q^{510}, q^{544}; q^{578})_{\infty}^{2}
	    }
	\end{aligned} 
	\right)
	\]
	
	\[
	\quad +q^{27}\left(
	\begin{aligned}
		&\frac{(q^{119}, q^{153}, q^{425}, q^{459}; q^{578})_{\infty}}
		{(q^{272}, q^{306}; q^{578})_{\infty}} \\ 
		&\qquad \times
		\frac{
			(q^{17}, q^{51}, q^{85}, q^{187}, q^{221}, q^{255}, q^{289}, q^{323}, q^{357}, q^{391},  q^{493}, q^{527},  q^{561}; q^{578})_{\infty}^{2}}{(q^{34}, q^{68}, q^{102}, q^{136}, q^{170},  q^{204}, q^{238}, q^{340}, q^{374}, q^{408},  q^{442}, q^{476}, q^{510}, q^{544}; q^{578})_{\infty}^{2}} 
	\end{aligned}
	\right)
	\]
	
	\[
	\quad +q^{36}\left(
	\begin{aligned}
			&\frac{(q^{153},q^{425}; q^{578})_{\infty}}
			{(q^{272}, q^{306}; q^{578})_{\infty}} \\
			&\qquad \times
			\frac{(q^{17}, q^{51}, q^{85}, q^{119}, q^{187}, q^{221}, q^{255}, q^{289}, q^{323}, q^{357}, q^{391}, q^{459}, q^{493}, q^{527}, q^{561}; q^{578})_{\infty}^{2}}{(q^{34}, q^{68}, q^{102}, q^{136}, q^{170}, q^{204}, q^{238}, q^{340}, q^{374}, q^{408}, q^{442}, q^{476}, q^{510}, q^{544}; q^{578})_{\infty}^{2}}
	\end{aligned}
	\right) 
	\]
	
	\[
	\quad +q^{45}\left(
	\begin{aligned}
		&\frac{(q^{153}, q^{187}, q^{391}, q^{425}; q^{578})_{\infty}}{(q^{238}, q^{340}; q^{578})_{\infty}} \\
		&\qquad \times
		\frac{(q^{17}, q^{51}, q^{85}, q^{119}, q^{221}, q^{255}, q^{289}, q^{323}, q^{357}, q^{459}, q^{493}, q^{527}, q^{561}; q^{578})_{\infty}^{2}}{(q^{34}, q^{68}, q^{102}, q^{136}, q^{170}, q^{204}, q^{272}, q^{306}, q^{374}, q^{408},  q^{442}, q^{476}, q^{510}, q^{544}; q^{578})_{\infty}^{2}}
	\end{aligned} 
	\right)
	\]
	
	\[
	\quad +q^{54}\left(
	\begin{aligned}
			&\frac{(q^{153}, q^{221},  q^{357},  q^{425}; q^{578})_{\infty}}{(q^{204}, q^{374}; q^{578})_{\infty}} \\
			&\qquad \times
			\frac{(q^{17}, q^{51}, q^{85}, q^{119}, q^{187}, q^{255}, q^{289}, q^{323}, q^{391}, q^{459}, q^{493}, q^{527}, q^{561}; q^{578})_{\infty}^{2}}{(q^{34}, q^{68}, q^{102}, q^{136}, q^{170}, q^{238}, q^{272}, q^{306}, q^{340}, q^{408}, q^{442}, q^{476}, q^{510}, q^{544}; q^{578})_{\infty}^{2}} 
	\end{aligned}
	\right)
	\]
	
	\[
	\quad +q^{63}\left(
	\begin{aligned}
	&\frac{
		( q^{153}, q^{255}, q^{323}, q^{425}; q^{578})_{\infty}}{(q^{170}, q^{408}; q^{578})_{\infty}} \\
		&\qquad \times
		\frac{(q^{17}, q^{51}, q^{85}, q^{119}, q^{187}, q^{221}, q^{289}, q^{357}, q^{391}, q^{459}, q^{493}, q^{527}, q^{561}; q^{578})_{\infty}^{2}}{(q^{34}, q^{68}, q^{102}, q^{136}, q^{204}, q^{238}, q^{272}, q^{306}, q^{340}, q^{374}, q^{442}, q^{476}, q^{510}, q^{544}; q^{578})_{\infty}^{2}}
	\end{aligned}
	\right)
	\]

	\[
	\quad +q^{72}\left(
	\begin{aligned}
		&\frac{(q^{153}, q^{289}, q^{425}; q^{578})_{\infty}}{(q^{136}, q^{442}; q^{578})_{\infty}} \\
		&\qquad \times
		\frac{(q^{17}, q^{51}, q^{85}, q^{119}, q^{187}, q^{221}, q^{255}, q^{323}, q^{357}, q^{391}, q^{459}, q^{493}, q^{527}, q^{561}; q^{578})_{\infty}^{2}}{(q^{34}, q^{68}, q^{102}, q^{170}, q^{204}, q^{238}, q^{272}, q^{306}, q^{340}, q^{374}, q^{408}, q^{476}, q^{510}, q^{544}; q^{578})_{\infty}^{2}}
	\end{aligned}
	\right)
	\]
	
	\[
	\quad +q^{81}\left(
	\begin{aligned}
		&\frac{(q^{153}, q^{255}, q^{323}, q^{425}; q^{578})_{\infty}}{(q^{102}, q^{476}; q^{578})_{\infty}} \\
		&\qquad \times
		\frac{(q^{17}, q^{51}, q^{85}, q^{119}, q^{187}, q^{221}, q^{289}, q^{357}, q^{391}, q^{459}, q^{493}, q^{527}, q^{561}; q^{578})_{\infty}^{2}}{(q^{34}, q^{68}, q^{136}, q^{170},  q^{204}, q^{238}, q^{272}, q^{306}, q^{340}, q^{374}, q^{408}, q^{442}, q^{510}, q^{544}; q^{578})_{\infty}^{2}} 
	\end{aligned}\right)
	\]
	
	\[
	\quad +q^{90}\left(
	\begin{aligned}
		&\frac{(q^{153}, q^{221},  q^{357}, q^{425}; q^{578})_{\infty}}{(q^{68}, q^{510}; q^{578})_{\infty}} \\
		&\qquad \times
		\frac{(q^{17}, q^{51}, q^{85}, q^{119}, q^{187}, q^{255}, q^{289}, q^{323}, q^{391}, q^{459}, q^{493}, q^{527}, q^{561}; q^{578})_{\infty}^{2}}{(q^{34}, q^{102}, q^{136}, q^{170}, q^{204}, q^{238}, q^{272}, q^{306}, q^{340}, q^{374}, q^{408}, q^{442}, q^{476}, q^{544}; q^{578})_{\infty}^{2}}
	\end{aligned} 
	\right)
	\]
	
	\[
	\quad +q^{99}\left(
	\begin{aligned}
		&\frac{(q^{153}, q^{187}, q^{425}, q^{391}; q^{578})_{\infty}}{(q^{34}, q^{544}; q^{578})_{\infty}} \\
		&\qquad \times
		\frac{(q^{17}, q^{51}, q^{85}, q^{119}, q^{221}, q^{255}, q^{289}, q^{323}, q^{357}, q^{459}, q^{493}, q^{527}, q^{561}; q^{578})_{\infty}^{2}}{(q^{68}, q^{102}, q^{136}, q^{170}, q^{204}, q^{238}, q^{272}, q^{306}, q^{340}, q^{374}, q^{408}, q^{442}, q^{476}, q^{510}; q^{578})_{\infty}^{2}}
	\end{aligned}
	\right) 
	\]
	
	\[
	\quad + q^{117}\left(
	\begin{aligned}
		&(q^{-34}, q^{119}, q^{153}, q^{425}, q^{459}; q^{578})_{\infty} \\
		&\qquad \times
		\frac{
			(q^{17}, q^{51}, q^{85}, q^{187}, q^{221}, q^{255}, q^{289}, q^{323}, q^{357}, q^{391}, q^{493}, q^{527}, q^{561}; q^{578})_{\infty}^{2}
		}{
			(q^{34}, q^{68}, q^{102}, q^{136}, q^{170}, q^{204}, q^{238}, q^{272}, q^{306}, q^{340}, q^{374}, q^{408}, q^{442}, q^{476}, q^{510}, q^{544}; q^{578})_{\infty}^{2}
		}
	\end{aligned}
	\right)
	\]
	
	\[
	\quad +q^{126}\left(
	\begin{aligned}
		&(q^{-68}, q^{85}, q^{153}, q^{425}, q^{493}; q^{578})_{\infty} \\
		&\qquad \times
		\frac{(q^{17}, q^{51}, q^{119}, q^{187}, q^{221}, q^{255}, q^{289}, q^{323}, q^{357}, q^{391}, q^{459}, q^{527}, q^{561}; q^{578})_{\infty}^{2}}{(q^{34}, q^{68}, q^{102}, q^{136},q^{170}, q^{204}, q^{238}, q^{272}, q^{306}, q^{340}, q^{374}, q^{408}, q^{442}, q^{476}, q^{510}, q^{544}; q^{578})_{\infty}^{2}}
    \end{aligned} 
    \right)
	\]
	
	\[
	\quad +q^{135}\left(
	\begin{aligned}
		&(q^{-102}, q^{51}, q^{153}, q^{425}, q^{527}; q^{578})_{\infty} \\
		&\qquad \times
		\frac{(q^{17}, q^{85}, q^{119}, q^{187}, q^{221}, q^{255}, q^{289}, q^{323}, q^{357}, q^{391}, q^{459}, q^{493}, q^{561}; q^{578})_{\infty}^{2}}{(q^{34}, q^{68}, q^{102}, q^{136},q^{170}, q^{204}, q^{238}, q^{272}, q^{306}, q^{340}, q^{374}, q^{408}, q^{442}, q^{476}, q^{510}, q^{544}; q^{578})_{\infty}^{2}}
	\end{aligned}
	\right)
	\]
	
	\[
	\quad +q^{144}\left(
	\begin{aligned}
		&(q^{-136}, q^{17}, q^{153}, q^{425}, q^{561}; q^{578})_{\infty} \\
		&\qquad \times
		\frac{(q^{51}, q^{85}, q^{119}, q^{187}, q^{221}, q^{255}, q^{289}, q^{323}, q^{357}, q^{391}, q^{459}, q^{493}, q^{527}; q^{578})_{\infty}^{2}}{(q^{34}, q^{68}, q^{102}, q^{136}, q^{170}, q^{204}, q^{238}, q^{272}, q^{306}, q^{340}, q^{374}, q^{408}, q^{442}, q^{476}, q^{510}, q^{544}; q^{578})_{\infty}^{2}} 
	\end{aligned}
	\right)
	\tag{3.3}
	\]

	where we have used the identity $f(-1,a)=0$ from \cite[pp.~34, Entry 18(iii)]{Berndt}. 
	Since the right-hand side of the equation ${(3.3)}$ contains no term of the form $q^{17n+6}$, it follows that the coefficient of $q^{17n+6}$ is zero, which yields the desired result.
	\end{proof}
	
	\begin{remark}
		The following table represents the remaining vanishing coefficients in the $q$-series expansions connected with the continued fractions ${X_i(q)}$, where $i=2,3,4,5,6,7$ and $8$.
	\end{remark}
	\[
	\renewcommand{\arraystretch}{2}
	\begin{array}{|l|c|}
		\hline
		\text{q-series/continued fractions} & \text{Vanishing coefficients} \\ 
		\hline
    	\frac{1}{X_2^*(q)} = q^{-3/4} X_2(q)
		= \frac{(q^{7}, q^{27}; q^{34})_\infty}{(q^{10}, q^{24}; q^{34})_\infty}
		= \sum_{n=0}^{\infty} \xi'_n q^n,
		& \xi'_{17n+6} = 0 \\ 
		\hline

		X_3^*(q) = q^{-5/4} X_3(q)
		= \frac{(q^6, q^{28}; q^{34})_\infty}{(q^{11}, q^{23}; q^{34})_\infty}
		= \sum_{n=0}^{\infty} \phi_n q^n,
		& \phi_{17n+2} = 0 \\ 
		\hline
		
		\frac{1}{X_4^*(q)} = q^{-7/4} X_4(q)
		= \frac{(q^{5}, q^{29}; q^{34})_\infty}{(q^{12}, q^{22}; q^{34})_\infty}
		= \sum_{n=0}^{\infty} \phi'_n q^n,
		& \phi'_{17n+2} = 0 \\ 
		\hline
		
		X_5^*(q) = q^{-9/4} X_5(q)
		= \frac{(q^4, q^{30}; q^{34})_\infty}{(q^{13}, q^{21}; q^{34})_\infty}
		= \sum_{n=0}^{\infty} \zeta_n q^n
		& \zeta_{17n+11} = 0 \\ 
		\hline
		
		\frac{1}{X_6^*(q)} = q^{-11/4} X_6(q)
		= \frac{(q^{3}, q^{31}; q^{34})_\infty}{(q^{14}, q^{20}; q^{34})_\infty}
		= \sum_{n=0}^{\infty} \zeta'_n q^n,
		& \zeta'_{17n+11} = 0 \\ 
		\hline
		
		X_7^*(q) = q^{-13/4} X_7(q)
		= \frac{(q^2, q^{32}; q^{34})_\infty}{(q^{15}, q^{19}; q^{34})_\infty}
		= \sum_{n=0}^{\infty} \eta_n q^n,
		& \eta_{17n+16} = 0 \\ 
		\hline
		
	    \frac{1}{X_8^*(q)} = q^{-15/4} X_8(q)
     	= \frac{(q^{1}, q^{33}; q^{34})_\infty}{(q^{16}, q^{18}; q^{34})_\infty}
    	= \sum_{n=0}^{\infty} \eta'_n q^n,
		& \eta'_{17n+16} = 0 \\ 
		\hline
        \hline
		
	\end{array}
	\]
	
	\begin{theorem}
		If
		\[
		\frac{1}{Y_7^*(q)} = q^{-7} Y_7(q)
		= \frac{(q^{3}, q^{65}; q^{68})_\infty}{(q^{31}, q^{37}; q^{68})_\infty}
		= \sum_{n=0}^{\infty} \xi_n q^n,
		\]
		then we have
		\[
		\xi_{34n+28} = 0.
		\]
	\end{theorem}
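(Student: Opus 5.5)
The plan is to run the argument of Theorem 3.3 again, now at modulus $68$, using the Andrews--Bressoud $p$-dissection (3.1). I choose parameters so that the left side of (3.1) becomes $q^{-7}Y_7(q)$ times a factor that is a power series in $q^{34}$. Take $x=68$, $y=31$, $z=34$. Then $y+z=65$ and $x-y-z=3$, so the left side of (3.1) is
\[
\frac{(q^{68},q^{68},q^{65},q^{3};q^{68})_\infty}{(q^{34},q^{34},q^{31},q^{37};q^{68})_\infty}
=\frac{(q^{68};q^{68})_\infty^{2}}{(q^{34};q^{68})_\infty^{2}}\cdot\frac{(q^{3},q^{65};q^{68})_\infty}{(q^{31},q^{37};q^{68})_\infty}.
\]
For the dissection take $w=34$. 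Here $\gcd(31,34)=1$. Every exponent on the right side of (3.1), namely $wy+z+jx$, $(w-j)x-wy-z$, $jx+z$, $wy$ and $(x-y)w$, is a multiple of $34$, and the base is $q^{2312}$. Hence the $j$-th summand has the form $q^{31j}G_j(q^{34})$ for $0\le j\le 33$.

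Next I multiply both sides by $(q^{34};q^{68})_\infty^{2}/(q^{68};q^{68})_\infty^{2}$, which is a series in $q^{34}$. This gives
\[
\sum_{n\ge0}\xi_nq^n=\sum_{j=0}^{33}q^{31j}H_j(q^{34})
\]
for some power series $H_j$, written out term by term as in (3.3). Since $\gcd(31,34)=1$, the residues $31j \bmod 34$ run over all classes modulo $34$ exactly once. So the coefficients $\xi_{34n+r}$ come from the single summand with $31j\equiv r \pmod{34}$. A residue class vanishes precisely when its summand vanishes identically.

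The vanishing comes from the numerator factor $(q^{(w-j)x-wy-z};q^{wx})_\infty$. This factor is zero when its exponent is $0$, because it then contains $1-q^0$; equivalently one can use $f(-1,a)=0$, as in the proof of Theorem 3.3. The condition is $68(34-j)=34\cdot31+34=1088$, i.e. $j=18$. The surviving summand therefore carries $q^{31\cdot 18}=q^{558}$, and the vanishing class is $558 \bmod 34$.

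For $j>18$ the exponent $(w-j)x-wy-z$ is negative. These factors have the form $(q^{-68m};q^{2312})_\infty$ with $1\le m\le 15$, so they are nonzero and merely contribute finite Laurent prefactors. As with the $q^{117},\dots,q^{144}$ terms of (3.3), these prefactors must be expanded carefully: they shift exponents by multiples of $68$, which keeps them multiples of $34$ and so does not change the residue classes.

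The main obstacle is the final bookkeeping. I need to check that the index of the annihilated summand really lands on the class $28 \bmod 34$ claimed in the statement. A direct computation gives $558=16\cdot34+14$, i.e. the class $14$, which does not match. This is the step I expect to be hardest. I would first recheck the choice $(x,y,z,w)$ against the normalisation in (3.1), including the alternative $y=37$ and the precise form of the factor $(q^{wy+z+jx};q^{wx})_\infty$. Only then would I conclude the proof by reading off the vanishing class, as was done for $\xi_{17n+6}$ in Theorem 3.3. If the mismatch persists, the statement's residue $28$ itself would have to be tested numerically against the product expansion.
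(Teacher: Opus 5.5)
Your framework is the same as the paper's: Andrews--Bressoud with $w=34$, then multiply by a series in $q^{34}$, use that $\gcd(y,34)=1$ makes the residues $yj \bmod 34$ distinct, and kill one summand via $(q^{0};q^{2312})_\infty=0$. Your arithmetic is also right. For the product \emph{as displayed}, $\frac{(q^{3},q^{65};q^{68})_\infty}{(q^{31},q^{37};q^{68})_\infty}$, the choice $y=31$ kills $j=18$ and hence the class $14$. That is exactly the later table entry $\xi'_{34n+14}=0$ for $Y_7^*(q)=q^{-7}Y_7(q)$.

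The gap is that you stop at the mismatch. The remedies you suggest (renormalising (3.1), trying $y=37$) cannot produce $28$, because for the displayed product the claim is false. Modulo $q^{63}$ that product equals $(1-q^{3})(1+q^{31}+q^{37}+q^{62})$, so $\xi_{62}=1$ with $62=34\cdot 1+28$. In your own dissection, class $28$ is carried by the non-vanishing summand $j=2$, since $31\cdot 2=62$.

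The missing observation is that the displayed product contradicts the label $1/Y_7^*(q)$. The residue $28$ belongs to the reciprocal
\[
\frac{q^{7}}{Y_7(q)}=\frac{(q^{31},q^{37};q^{68})_\infty}{(q^{3},q^{65};q^{68})_\infty},
\]
and this is the series the paper actually dissects in (3.4). Take $x=68$ (the paper's ``$x=64$'' is a typo), $y=3$, $z=34$, $w=34$.

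\begin{itemize}
\item The left side of (3.1) is $\frac{(q^{68};q^{68})_\infty^{2}}{(q^{34};q^{68})_\infty^{2}}$ times this reciprocal.
\item The $j$-th summand has the form $q^{3j}G_j(q^{34})$.
\item The factor $(q^{(34-j)\cdot 68-136};q^{2312})_\infty$ becomes $(q^{0};q^{2312})_\infty=0$ exactly at $j=32$. That summand carries $q^{96}$, and $96\equiv 28 \pmod{34}$.
\item Since $\gcd(3,34)=1$, no other summand meets class $28$.
\item The only negative-exponent factor is $(q^{-68};q^{2312})_\infty$ at $j=33$, which only shifts exponents by $68$ and so preserves residues.
\end{itemize}

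Multiplying by $(q^{34};q^{68})_\infty^{2}/(q^{68};q^{68})_\infty^{2}$ then finishes the proof. As a check, the reciprocal series does have vanishing coefficients at $q^{28}$, $q^{62}$ and $q^{96}$. So your analysis is correct, but to prove the theorem you must apply it to the reciprocal series with $y=3$.
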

	\begin{proof}
			\[
		\text{Substituting } x = 64,\; y = 3,\; z = 34,\; \text{and } w = 34 \text{ into (3.1), we obtain}
		\]
		\[
		\begin{aligned}
			\frac{(q^{68}, q^{68}, q^{37}, q^{31}; q^{68})_{\infty}} 
			{(q^{34}, q^{34}, q^{3}, q^{65}; q^{68})_{\infty}} 
			&= \frac{(q^{2312}, q^{2312}, q^{136}, q^{2176}; q^{2312})_{\infty}}
			{(q^{34}, q^{2278}, q^{102}, q^{2210}; q^{2312})_{\infty}}
			+ q^{3}\frac{(q^{2312}, q^{2312}, q^{204}, q^{2108}; q^{2312})_{\infty}}
			{(q^{102}, q^{2210}, q^{102}, q^{2210}; q^{2312})_{\infty}} \\
			&\quad + q^{6}\frac{(q^{2312}, q^{2312}, q^{272}, q^{2040}; q^{2312})_{\infty}}
			{(q^{170}, q^{2142}, q^{102}, q^{2210}; q^{2312})_{\infty}}
			+ q^{9}\frac{(q^{2312}, q^{2312}, q^{340}, q^{1972}; q^{2312})_{\infty}}
			{(q^{238}, q^{2074}, q^{102}, q^{2210}; q^{2312})_{\infty}} \\
			&\quad + q^{12}\frac{(q^{2312}, q^{2312}, q^{408}, q^{1904}; q^{2312})_{\infty}}
			{(q^{306}, q^{2006}, q^{102}, q^{2210}; q^{2312})_{\infty}}
			+ q^{15}\frac{(q^{2312}, q^{2312}, q^{476}, q^{1836}; q^{2312})_{\infty}}
			{(q^{374}, q^{1938}, q^{102}, q^{2210}; q^{2312})_{\infty}} \\
			&\quad + q^{18}\frac{(q^{2312}, q^{2312}, q^{544}, q^{1768}; q^{2312})_{\infty}}
			{(q^{442}, q^{1870}, q^{102}, q^{2210}; q^{2312})_{\infty}} 
			+ q^{21}\frac{(q^{2312}, q^{2312}, q^{612}, q^{1700}; q^{2312})_{\infty}}
			{(q^{510}, q^{1802}, q^{102}, q^{2210}; q^{2312})_{\infty}} \\
			&\quad + q^{24}\frac{(q^{2312}, q^{2312}, q^{680}, q^{1632}; q^{2312})_{\infty}}
			{(q^{578}, q^{1734}, q^{102}, q^{2210}; q^{2312})_{\infty}}
			+ q^{27}\frac{(q^{2312}, q^{2312}, q^{748}, q^{1564}; q^{2312})_{\infty}}
			{(q^{646}, q^{1666}, q^{102}, q^{2210}; q^{2312})_{\infty}} \\
			&\quad + q^{30}\frac{(q^{2312}, q^{2312}, q^{816}, q^{1496}; q^{2312})_{\infty}}
			{(q^{714}, q^{1598}, q^{102}, q^{2210}; q^{2312})_{\infty}}
			+ q^{33}\frac{(q^{2312}, q^{2312}, q^{884}, q^{1428}; q^{2312})_{\infty}}
			{(q^{782}, q^{1530}, q^{102}, q^{2210}; q^{2312})_{\infty}} \\
			&\quad + q^{36}\frac{(q^{2312}, q^{2312}, q^{952}, q^{1360}; q^{2312})_{\infty}}
			{(q^{850}, q^{1462}, q^{102}, q^{2210}; q^{2312})_{\infty}}
			+ q^{39}\frac{(q^{2312}, q^{2312}, q^{1020}, q^{1292}; q^{2312})_{\infty}}
			{(q^{918}, q^{1394}, q^{102}, q^{2210}; q^{2312})_{\infty}} \\
			&\quad +q^{42}\frac{(q^{2312}, q^{2312}, q^{1088}, q^{1224}; q^{2312})_{\infty}}
			{(q^{986}, q^{1326}, q^{102}, q^{2210}; q^{2312})_{\infty}} 
			+ q^{45}\frac{(q^{2312}, q^{2312}, q^{1156}, q^{1156}; q^{2312})_{\infty}}
			{(q^{1054}, q^{1258}, q^{102}, q^{2210}; q^{2312})_{\infty}} \\
			&\quad +  q^{48}\frac{(q^{2312}, q^{2312}, q^{1224}, q^{1088}; q^{2312})_{\infty}}
			{(q^{1122}, q^{1190}, q^{102}, q^{2210}; q^{2312})_{\infty}}
			+ q^{51}\frac{(q^{2312}, q^{2312}, q^{1292}, q^{1020}; q^{2312})_{\infty}}
			{(q^{1190}, q^{1122}, q^{102}, q^{2210}; q^{2312})_{\infty}} \\
			&\quad + q^{54}\frac{(q^{2312}, q^{2312}, q^{1360}, q^{952}; q^{2312})_{\infty}}
			{(q^{1258}, q^{1054}, q^{102}, q^{2210}; q^{2312})_{\infty}} 
			+ q^{57}\frac{(q^{2312}, q^{2312}, q^{1428}, q^{884}; q^{2312})_{\infty}}
			{(q^{1326}, q^{986}, q^{102}, q^{2210}; q^{2312})_{\infty}} \\
			&\quad + q^{60}\frac{(q^{2312}, q^{2312}, q^{1496}, q^{816}; q^{2312})_{\infty}}
			{(q^{1394}, q^{918}, q^{102}, q^{2210}; q^{2312})_{\infty}}
			+ q^{63}\frac{(q^{2312}, q^{2312}, q^{1564}, q^{748}; q^{2312})_{\infty}}
			{(q^{1462}, q^{850}, q^{102}, q^{2210}; q^{2312})_{\infty}} \\
			&\quad +  q^{66}\frac{(q^{2312}, q^{2312}, q^{1632}, q^{680}; q^{2312})_{\infty}}
			{(q^{1530}, q^{782}, q^{102}, q^{2210}; q^{2312})_{\infty}}
			+ q^{69}\frac{(q^{2312}, q^{2312}, q^{1700}, q^{612}; q^{2312})_{\infty}}
			{(q^{1598}, q^{714}, q^{102}, q^{2210}; q^{2312})_{\infty}} \\
			&\quad + q^{72}\frac{(q^{2312}, q^{2312}, q^{1768}, q^{544}; q^{2312})_{\infty}}
			{(q^{1666}, q^{646}, q^{102}, q^{2210}; q^{2312})_{\infty}}
			+ q^{75}\frac{(q^{2312}, q^{2312}, q^{1836}, q^{476}; q^{2312})_{\infty}}
			{(q^{1734}, q^{578}, q^{102}, q^{2210}; q^{2312})_{\infty}} \\
			&\quad +  q^{78}\frac{(q^{2312}, q^{2312}, q^{1904}, q^{408}; q^{2312})_{\infty}}
			{(q^{1802}, q^{510}, q^{102}, q^{2210}; q^{2312})_{\infty}} 
			+ q^{81}\frac{(q^{2312}, q^{2312}, q^{1972}, q^{340}; q^{2312})_{\infty}}
			{(q^{1870}, q^{442}, q^{102}, q^{2210}; q^{2312})_{\infty}} \\
			&\quad + q^{84}\frac{(q^{2312}, q^{2312}, q^{2040}, q^{272}; q^{2312})_{\infty}}
			{(q^{1938}, q^{374}, q^{102}, q^{2210}; q^{2312})_{\infty}}
			+ q^{87}\frac{(q^{2312}, q^{2312}, q^{2108}, q^{204}; q^{2312})_{\infty}}
			{(q^{2006}, q^{306}, q^{102}, q^{2210}; q^{2312})_{\infty}} \\
			&\quad + q^{90}\frac{(q^{2312}, q^{2312}, q^{2176}, q^{136}; q^{2312})_{\infty}}
			{(q^{2074}, q^{238}, q^{102}, q^{2210}; q^{2312})_{\infty}}
			+ q^{93}\frac{(q^{2312}, q^{2312}, q^{2244}, q^{68}; q^{2312})_{\infty}}
			{(q^{2142}, q^{170}, q^{102}, q^{2210}; q^{2312})_{\infty}} \\
			&\quad +  q^{96}\frac{(q^{2312}, q^{2312}, q^{2312}, q^{0}; q^{2312})_{\infty}}
			{(q^{2210}, q^{102}, q^{102}, q^{2210}; q^{2312})_{\infty}}
			+ q^{99}\frac{(q^{2312}, q^{2312}, q^{2380}, q^{-68}; q^{2312})_{\infty}}
			{(q^{2278}, q^{34}, q^{102}, q^{2210}; q^{2312})_{\infty}} 
		\end{aligned}
		\tag{3.4}
		\]
		
		Multiplying ${(q^{34}; q^{68})_{\infty}^{2}}/{(q^{68}; q^{68})_{\infty}^{2}}$ on both sides of $(3.4)$ and using $f(-1, a)=0$, then extracting the terms involving $q^{34n+28}$, we arrive at the result.
	\end{proof}
	
	\begin{remark}
		The following table represents the remaining vanishing coefficients in the $q$-series expansions connected with the continued fractions ${Y_i(q)}$, where $i=1,2,3,4,5,6,7$ and $8$.
	\end{remark}
	\[
	\renewcommand{\arraystretch}{2}
	\begin{array}{|l|c|}
		\hline
		\text{q-series/continued fractions} & \text{Vanishing coefficients} \\ 
		\hline
		{Y_1^*(q)} = q^{-1} Y_1(q)
		= \frac{(q^{15}, q^{53}; q^{68})_\infty}{(q^{19}, q^{49}; q^{68})_\infty}
		= \sum_{n=0}^{\infty} \alpha_n q^n,
		& \alpha_{34n+14} = 0 \\ 
		\hline

		\frac{1}{{Y}_1^*(q)} = q^{-1} Y_1(q)
		= \frac{(q^{15}, q^{53}; q^{68})_\infty}{(q^{19}, q^{49}; q^{68})_\infty}
		= \sum_{n=0}^{\infty} \alpha'_n q^n,
		& \alpha'_{34n+16} = 0 \\ 
		\hline
		
		{Y_2^*(q)} = q^{-2} Y_2(q)
		= \frac{(q^{13}, q^{55}; q^{68})_\infty}{(q^{21}, q^{47}; q^{68})_\infty}
		= \sum_{n=0}^{\infty} \beta_n q^n,
		& \beta_{34n+7} = 0 \\ 
		\hline

		\frac{1}{{Y}_2^*(q)} = q^{-2} Y_2(q)
		= \frac{(q^{13}, q^{55}; q^{68})_\infty}{(q^{21}, q^{47}; q^{68})_\infty}
		= \sum_{n=0}^{\infty} \beta'_n q^n,
		& \beta'_{34n+11} = 0 \\ 
		\hline
		
		{Y_3^*(q)} = q^{-3} Y_3(q)
		= \frac{(q^{11}, q^{57}; q^{68})_\infty}{(q^{23}, q^{45}; q^{68})_\infty}
		= \sum_{n=0}^{\infty} \gamma_n q^n,
		& \gamma_{34n+30} = 0 \\ 
		\hline

		\frac{1}{{Y}_3^*(q)} = q^{-3} Y_3(q)
		= \frac{(q^{11}, q^{57}; q^{68})_\infty}{(q^{23}, q^{45}; q^{68})_\infty}
		= \sum_{n=0}^{\infty} \gamma'_n q^n,
		& \gamma'_{34n+2} = 0 \\ 
		\hline
		
		{Y_4^*(q)} = q^{-4} Y_4(q)
		= \frac{(q^{9}, q^{59}; q^{68})_\infty}{(q^{25}, q^{43}; q^{68})_\infty}
		= \sum_{n=0}^{\infty} \delta_n q^n,
		& \delta_{34n+15} = 0 \\ 
		\hline

		\frac{1}{{Y}_4^*(q)} = q^{-4} Y_4(q)
		= \frac{(q^{9}, q^{59}; q^{68})_\infty}{(q^{25}, q^{43}; q^{68})_\infty}
		= \sum_{n=0}^{\infty} \delta'_n q^n,
		& \delta'_{34n+23} = 0 \\ 
		\hline
		
		{Y_5^*(q)} = q^{-5} Y_5(q)
		= \frac{(q^{7}, q^{61}; q^{68})_\infty}{(q^{27}, q^{41}; q^{68})_\infty}
		= \sum_{n=0}^{\infty} \zeta_n q^n,
		& \zeta_{34n+30} = 0 \\ 
		\hline

		\frac{1}{{Y}_5^*(q)} = q^{-5} Y_5(q)
		= \frac{(q^{7}, q^{61}; q^{68})_\infty}{(q^{27}, q^{41}; q^{68})_\infty}
		= \sum_{n=0}^{\infty} \zeta'_n q^n,
		& \zeta'_{34n+6} = 0 \\ 
        \hline
		
		{Y_6^*(q)} = q^{-6} Y_6(q)
		= \frac{(q^{5}, q^{63}; q^{68})_\infty}{(q^{29}, q^{39}; q^{68})_\infty}
		= \sum_{n=0}^{\infty} \eta_n q^n,
		& \eta_{34n+7} = 0 \\ 
		\hline

		\frac{1}{{Y}_6^*(q)} = q^{-6} Y_6(q)
		= \frac{(q^{5}, q^{63}; q^{68})_\infty}{(q^{29}, q^{39}; q^{68})_\infty}
		= \sum_{n=0}^{\infty} \eta'_n q^n,
		& \eta'_{34n+19} = 0 \\ 
		\hline
		
		{Y_7^*(q)} = q^{-7} Y_7(q)
		= \frac{(q^{3}, q^{65}; q^{68})_\infty}{(q^{31}, q^{37}; q^{68})_\infty}
		= \sum_{n=0}^{\infty} \xi'_n q^n,
		& \xi'_{34n+14} = 0 \\ 
		\hline
		
		{Y_8^*(q)} = q^{-8} Y_8(q)
		= \frac{(q, q^{67}; q^{68})_\infty}{(q^{33}, q^{35}; q^{68})_\infty}
		= \sum_{n=0}^{\infty} \lambda_n q^n,
		& \lambda_{34n+17} = 0 \\ 
		\hline

		\frac{1}{{Y}_8^*(q)} = q^{-8} Y_8(q)
		= \frac{(q, q^{67}; q^{68})_\infty}{(q^{33}, q^{35}; q^{68})_\infty}
		= \sum_{n=0}^{\infty} \lambda'_n q^n,
		& \lambda'_{34n+33} = 0 \\ 
		\hline
		\hline
		
	\end{array}
	\]
    \section{Some partition-theoretic results}\label{s4}
	
	In this section, we show that color partition identities can be obtained from the theta-function identities
	established in Theorem (2.2) using color partitions of integers. As an example, we deduce two partition-theoretic
	identities from the theta-function identities of the continued fraction $Y_8(q)$. First, we will give the definition of color
	partitions of a positive integer $n$ and its generating function.
	
	A partition of a positive integer $n$ is a non-increasing sequence of positive integers, called parts, whose
	sum equals $n$. A part in a partition of $n$ is said to have $r$ colors if each part has $r$ copies and all of
	them are viewed as distinct objects. For any positive integer $n$ and $r$, let $C_r(n)$ denote the number of
	partitions of $n$ in which each part has $r$ distinct colors. 
	
	For example, if each part of a partition of $3$ has $2$ colors, say Pink (indicated by the suffix $p$) and
	Green (indicated by the suffix $g$), then the number of $2$-color partitions of $3$ is $10$, namely
	\[
	3_p,\, 3_g,\, 2_p+1_p,\, 2_p+1_g,\, 2_g+1_p,\, 2_g+1_g,\, 
	1_p+1_p+1_p,\, 1_p+1_p+1_g,\, 1_p+1_g+1_g,\, 1_g+1_g+1_g.
	\]
	
	The generating function of $C_r(n)$ is given by
	\begin{equation}
		\sum_{n=0}^{\infty} C_r(n) q^n = \frac{1}{(q;q)_\infty^{\,r}} .
		\tag{4.1}
	\end{equation}
	
	For positive integers $s,m$ and $r$, the quotient
	\begin{equation}
		\frac{1}{(q^s;q^m)_\infty^{\,r}}
		\tag{4.2}
	\end{equation}
	is the generating function of the number of partitions of $n$ with parts congruent to $s$ modulo $m$ and
	each part having $r$ colors. For example,
	\begin{equation}
		\frac{1}{(q^{s_1};q^m)_\infty^{\,r}(q^{s_2};q^m)_\infty^{\,r}}
		=
		\frac{1}{(q^{s_1},q^{s_2};q^m)_\infty^{\,r}}
		\tag{4.3}
	\end{equation}
	is the generating function of the number of partitions of a positive integer with parts congruent to $s_1$
	or $s_2$ modulo $m$, each part having $r$ distinct colors.	
	For convenience, we use the notation
	\begin{equation}
		(q^{r\pm};q^t)_\infty := (q^r,q^{t-r};q^t)_\infty,
		\tag{4.4}
	\end{equation}
	where $r$ and $t$ are positive integers and $r<t$.

    \begin{theorem}
        For any integer $n \geq 16$, let $D_1(n)$ denote the number of partitions of $n$ into parts $\equiv \pm 1, \pm 16, \pm 18$ or $\pm 34 \pmod{68}$ such that the parts $\equiv \pm 1$ and $\pm 34 \pmod{68}$ have $2$ colors. Let $D_2(n)$ denote the number of partitions of $n$ into parts 
        $\equiv \pm 16, \pm 18, \pm 33$ or $\pm 34 \pmod{68}$ such that parts $\equiv \pm 33$ and $\pm 34 \pmod{68}$ have $2$ colors. 
        Let $D_3(n)$ denote the number of partitions of $n$ into parts $\equiv \pm 1, \pm 17$ and $\pm 33 \pmod{68}$ with $2$ colors.
        Then
        \[D_1(n) - D_2(n-16) - D_3(n) = 0.\]
    \end{theorem}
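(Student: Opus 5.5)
The plan is to derive the identity from Theorem 2.2(p), the ``minus'' identity for $Y_8(q)$; the relative minus sign in $D_1(n)-D_2(n-16)$ and the appearance of $\pm17$ parts in $D_3$ point to (p) rather than (o). Multiplying (p) by $q^{8}$ and using (1.25), with the notation (4.4), gives
\[
\frac{(q^{33\pm};q^{68})_\infty}{(q^{1\pm};q^{68})_\infty}
- q^{16}\frac{(q^{1\pm};q^{68})_\infty}{(q^{33\pm};q^{68})_\infty}
= \frac{\phi(q^{17})\, f(-q^{16},-q^{18})}{\psi(q^{34})\, f(-q,-q^{33})}.
\]
I then rewrite every theta function on the right as an infinite product modulo $68$, using the Jacobi triple product (1.3) together with (1.4)--(1.5):
\[
f(-q^{16},-q^{18})=(q^{16},q^{18},q^{34};q^{34})_\infty=(q^{16\pm},q^{18\pm},q^{34},q^{68};q^{68})^{\ast}_\infty ,
\]
\[
f(-q,-q^{33})=(q,q^{33},q^{34};q^{34})_\infty, \qquad
\psi(q^{34})=\frac{(q^{68};q^{68})_\infty}{(q^{34};q^{68})_\infty},
\]
\[
\phi(q^{17})=\frac{(-q^{17};q^{34})_\infty^2\,(q^{34};q^{34})_\infty \cdot(q^{17};q^{34})_\infty^{2}}{(q^{17};q^{34})_\infty^{2}}
=\frac{(q^{34};q^{68})_\infty^2\,(q^{34};q^{34})_\infty}{(q^{17};q^{34})_\infty^2}.
\]
Here $\ast$ is only a reminder to split each modulus-$34$ product into its two residue classes modulo $68$; for example $(q,q^{33};q^{34})_\infty=(q^{1\pm},q^{33\pm};q^{68})_\infty$ and $(q^{17};q^{34})_\infty=(q^{17\pm};q^{68})_\infty$.

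After cancelling the common factors $(q^{34};q^{34})_\infty$ and $(q^{68};q^{68})_\infty$, the right-hand side becomes a quotient of the form
\[
\frac{(q^{16\pm},q^{18\pm};q^{68})_\infty\,(q^{34};q^{68})_\infty^{k}}{(q^{1\pm},q^{33\pm};q^{68})_\infty\,(q^{17\pm};q^{68})_\infty^{2}}
\]
for some small integer $k$, which I will record carefully.

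Next I divide the whole identity by a single product $P$. The choice of $P$ is dictated by two requirements: the right-hand side must become exactly
\[
1/(q^{1\pm},q^{17\pm},q^{33\pm};q^{68})_\infty^{2},
\]
which is $\sum D_3(n)q^n$ by (4.2)--(4.3); and the first term on the left must become $\sum D_1(n)q^n$. Comparing the two forces
\[
P=(q^{16\pm},q^{18\pm};q^{68})_\infty\,(q^{33\pm};q^{68})_\infty\,(q^{34};q^{68})^{k}_\infty
\]
up to the exact power of $(q^{34};q^{68})_\infty$. With this $P$, the first left term becomes
\[
1/\bigl[(q^{1\pm};q^{68})_\infty^2(q^{16\pm},q^{18\pm};q^{68})_\infty(q^{34};q^{68})^{\cdot}_\infty\bigr]=\sum D_1(n)q^n .
\]
The second term becomes
\[
q^{16}/\bigl[(q^{33\pm};q^{68})_\infty^2(q^{16\pm},q^{18\pm};q^{68})_\infty(q^{34};q^{68})^{\cdot}_\infty\bigr]=\sum D_2(n)q^{n+16}.
\]
Equating coefficients of $q^n$ then gives $D_1(n)-D_2(n-16)=D_3(n)$. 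The hypothesis $n\ge16$ is there only so that $D_2(n-16)$ is defined.

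The main obstacle is purely bookkeeping, and it sits in the $(q^{34};q^{68})_\infty$ factors. I must check that the leftover power of $(q^{34};q^{68})_\infty$ on the right is exactly cancelled by $P$, so that no stray $\pm34$ term survives in $D_3$. I must also check that this same power appears as ``two colours'' of the class $34$ in $D_1$ and $D_2$; this is how I read ``$\pm34$ with $2$ colours'', since $34\equiv-34 \pmod{68}$. I would first carry the exponents through symbolically and then confirm the identity by expanding both sides to a few dozen terms. If the exponent of $(q^{34};q^{68})_\infty$ turns out off by one, I would reinterpret the colour count for the class $34$ accordingly.
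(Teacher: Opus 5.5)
Your route is the paper's: Theorem 2.2(p), Jacobi's triple product modulo $68$, division by one product, and comparison of coefficients. However, the two pieces of bookkeeping you postponed are exactly where the proposal goes wrong as written.

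\textbf{The divisor.} The $P$ you display is missing the factor $(q^{1\pm};q^{68})_\infty$. With your $P$:
\begin{itemize}
\item the first term becomes $1/\bigl[(q^{1\pm};q^{68})_\infty(q^{16\pm},q^{18\pm};q^{68})_\infty(q^{34};q^{68})_\infty^{k}\bigr]$, with only one power of $(q^{1\pm};q^{68})_\infty$;
\item the second term keeps $(q^{1\pm};q^{68})_\infty$ in its numerator;
\item the right side becomes $1/\bigl[(q^{1\pm};q^{68})_\infty(q^{17\pm},q^{33\pm};q^{68})_\infty^{2}\bigr]$.
\end{itemize}
So none of the three claimed generating functions actually appears. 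Your own requirement on the right side forces $P=(q^{1\pm},q^{16\pm},q^{18\pm},q^{33\pm};q^{68})_\infty(q^{34};q^{68})_\infty^{k}$, which is the paper's divisor in (4.7).

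\textbf{The exponent $k$.} This is not something to fit afterwards; it equals $4$. Two factors come from $(-q^{17};q^{34})_\infty^2$, one from $(q^{34};q^{34})_\infty/(q^{68};q^{68})_\infty$, and one from $1/\psi(q^{34})$, so
\[
\frac{\phi(q^{17})}{\psi(q^{34})}=\frac{(q^{34};q^{68})_\infty^{4}}{(q^{17};q^{34})_\infty^{2}},
\qquad
\frac{\phi(q^{17})f(-q^{16},-q^{18})}{\psi(q^{34})f(-q,-q^{33})}
=\frac{(q^{16\pm},q^{18\pm};q^{68})_\infty\,(q^{34};q^{68})_\infty^{4}}{(q^{1\pm},q^{33\pm};q^{68})_\infty\,(q^{17\pm};q^{68})_\infty^{2}}.
\]
Hence in $D_1$ and $D_2$ every part $\equiv 34 \pmod{68}$ must come in four colours. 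This is what the paper's notation (4.4) encodes: $(q^{34\pm};q^{68})_\infty^2=(q^{34},q^{34};q^{68})_\infty^2=(q^{34};q^{68})_\infty^4$, i.e.\ two colours for each of the formally distinct classes $+34$ and $-34$.

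The reading you announce (two colours in total, because $34\equiv-34$) makes the statement false. Under that reading both left-hand generating functions are $(q^{34};q^{68})_\infty^{2}$ times the correct ones, so their difference is $(q^{34};q^{68})_\infty^{2}\sum D_3(n)q^n$ rather than $\sum D_3(n)q^n$. A direct count at $n=34$ confirms this. Let $c$ be the number of colours of the part $34$; then
\[
D_1(34)=75+c,\qquad D_2(18)=1,\qquad D_3(34)=78.
\]
So the identity holds only for $c=4$, and fails by $-2$ for $c=2$.

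Your fallback of adjusting an exponent that is ``off by one'' does not cover a discrepancy of $4$ versus $2$. Once $P$ is corrected and $k=4$ is recorded, with the $\pm34$ class read via (4.4), your argument coincides with the paper's (4.5)--(4.8).
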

\begin{proof}
Employing (1.3), (1.4), (1.5) and (1.25) in the theta-function identity (P) in Theorem 2.2, we obtain
\begin{equation}
\frac{(q^{33\pm}; q^{68})_{\infty}}{q^{8}(q^{1\pm}; q^{68})_{\infty}}
- q^{8} \frac{(q^{1\pm}; q^{68})_{\infty}}{(q^{33\pm}; q^{68})_{\infty}}
- \frac{(q^{16\pm}; q^{34})_{\infty}(q^{34\pm}; q^{34})_{\infty}^6}
{q^{8}(q^{1\pm}; q^{34})_{\infty}(q^{17}; q^{17})_{\infty}^2 (q^{68}; q^{68})_{\infty}^4}
= 0.
\tag{4.5}
\end{equation}

Changing the products in the third term of (4.5) to base $q^{68}$ and simplifying, we obtain
\begin{equation}
\frac{(q^{33\pm}; q^{68})_{\infty}}{(q^{1\pm}; q^{68})_{\infty}}
- q^{16} \frac{(q^{1\pm}; q^{68})_{\infty}}{(q^{33\pm}; q^{68})_{\infty}}
- \frac{(q^{16\pm}, q^{18\pm}; q^{68})_{\infty}(q^{34\pm}; q^{68})_{\infty}^2}
{(q^{1\pm}, q^{33\pm}; q^{68})_{\infty}(q^{17\pm}; q^{68})_{\infty}^2}
= 0.
\tag{4.6}
\end{equation}

Dividing (4.6) by 
$(q^{1\pm}, q^{16\pm}, q^{18\pm}, q^{33\pm}; q^{68})_{\infty}
(q^{34\pm}; q^{68})_{\infty}^2 ,$ we obtain
\begin{align}
	\frac{1}{(q^{16\pm}, q^{18\pm}; q^{68})_{\infty}(q^{1\pm}, q^{34\pm}; q^{68})_{\infty}^2}
	&- q^{16} \frac{1}{(q^{16\pm}, q^{18\pm}; q^{68})_{\infty}(q^{33\pm}, q^{34\pm}; q^{68})_{\infty}^2} \\
	&- \frac{1}{(q^{1\pm}, q^{17\pm}, q^{33\pm}; q^{68})_{\infty}^2} = 0
	\tag{4.7}
\end{align}

Equation (4.7) is equivalent to
\begin{equation}
\sum_{n=0}^{\infty} D_1(n) q^n
- q^{16} \sum_{n=0}^{\infty} D_2(n) q^n
- \sum_{n=0}^{\infty} D_3(n) q^n
= 0,
\tag{4.8}
\end{equation}
where we take $D_1(0) = D_2(0) = D_3(0) = 1$.

Comparing the coefficients of $q^n$ on both sides of (4.8), we complete the proof.
\end{proof}

\medskip

\textbf{Example.}
By enumerating the relevant partitions of $n = 16$, one can verify that 
\[
D_1(16) = 18, \quad D_2(0) = 1, \quad D_3(16) = 17,
\]
which satisfy Theorem (4.1)

\begin{theorem}
For any integer $n \geq 16$, let $K_1(n)$ denote the number of partitions of $n$ into parts 
$\equiv \pm 1, \pm 17, \pm 32$ or $\pm 34 \pmod{68}$ such that the parts $\equiv \pm 1$ and $\pm 17 \pmod{68}$ have $2$ colors. 

Let $K_2(n)$ denote the number of partitions of $n$ into parts 
$\equiv \pm 17, \pm 32, \pm 33$ or $\pm 34 \pmod{68}$ such that parts $\equiv \pm 17$ and $\pm 33 \pmod{68}$ have $2$ colors. 

Let $K_3(n)$ denote the number of partitions of $n$ into parts 
$\equiv \pm 1, \pm 16, \pm 18$ and $\pm 33 \pmod{68}$ such that the parts $\equiv \pm 1$ and $\pm 33 \pmod{68}$ have $2$ colors. 

Then
\[
K_1(n) + K_2(n-16) = K_3(n).
\]
\end{theorem}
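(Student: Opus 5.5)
The plan is to mirror the proof of Theorem 4.1, but to start from identity (o) of Theorem 2.2 instead of (p), since the plus sign there should produce the relation $K_1(n)+K_2(n-16)=K_3(n)$. Using (1.25), I write $Y_8(q)=q^{8}(q^{1\pm};q^{68})_\infty/(q^{33\pm};q^{68})_\infty$. Multiplying (o) by $q^{8}$ then gives
\[
\frac{(q^{33\pm};q^{68})_\infty}{(q^{1\pm};q^{68})_\infty}+q^{16}\frac{(q^{1\pm};q^{68})_\infty}{(q^{33\pm};q^{68})_\infty}
=\frac{\phi(-q^{17})\,f(q^{16},q^{18})}{\psi(q^{34})\,f(-q,-q^{33})}.
\]

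Next I turn the right-hand side into a product using (1.3), (1.4) and (1.5). The factors are
\[
\phi(-q^{17})=\frac{(q^{17};q^{17})_\infty^2}{(q^{34};q^{34})_\infty},\qquad
\psi(q^{34})=\frac{(q^{68};q^{68})_\infty}{(q^{34};q^{68})_\infty},
\]
\[
f(q^{16},q^{18})=(-q^{16},-q^{18},q^{34};q^{34})_\infty,\qquad
f(-q,-q^{33})=(q,q^{33},q^{34};q^{34})_\infty .
\]
I then rewrite everything in base $q^{68}$. For the plus signs I use $(-a;q^{34})_\infty=(a^2;q^{68})_\infty/(a;q^{34})_\infty$, which gives
\[
(-q^{16},-q^{18};q^{34})_\infty=\frac{(q^{32},q^{36};q^{68})_\infty}{(q^{16\pm},q^{18\pm};q^{68})_\infty}.
\]
I split every other base-$q^{34}$ (and base-$q^{17}$) product into residue classes modulo $68$. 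For example, $(q;q^{34})_\infty=(q,q^{35};q^{68})_\infty$, and $(q^{17};q^{17})_\infty$ splits into the classes $\pm17$, $\pm34$ and $0$ modulo $68$.

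After cancelling the $(q^{68};q^{68})_\infty$ factors, I expect the right-hand side to collapse to
\[
\frac{(q^{17\pm};q^{68})_\infty^2\,(q^{32\pm},q^{34\pm};q^{68})_\infty}{(q^{1\pm},q^{33\pm};q^{68})_\infty\,(q^{16\pm},q^{18\pm};q^{68})_\infty}.
\]
This is the main obstacle: the exponent bookkeeping, especially checking that the powers of $(q^{34};q^{68})_\infty$ coming from $\phi$, $\psi$ and the two $f$'s combine exactly into $(q^{34\pm};q^{68})_\infty=(q^{34};q^{68})_\infty^2$. I would carry this out class by class modulo $68$. If a stray factor survives, I would recheck it against the analogous computation (4.5)–(4.6) in Theorem 4.1.

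Finally, I divide the whole identity by $(q^{1\pm},q^{33\pm},q^{32\pm},q^{34\pm};q^{68})_\infty(q^{17\pm};q^{68})_\infty^2$. The three terms then become, in order,
\[
\frac{1}{(q^{1\pm},q^{17\pm};q^{68})_\infty^2(q^{32\pm},q^{34\pm};q^{68})_\infty},\qquad
\frac{q^{16}}{(q^{17\pm},q^{33\pm};q^{68})_\infty^2(q^{32\pm},q^{34\pm};q^{68})_\infty},\qquad
\frac{1}{(q^{1\pm},q^{33\pm};q^{68})_\infty^2(q^{16\pm},q^{18\pm};q^{68})_\infty}.
\]
By (4.2)–(4.4), these are $\sum K_1(n)q^n$, $q^{16}\sum K_2(n)q^n$ and $\sum K_3(n)q^n$, with $K_i(0)=1$. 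The resulting identity is $\sum K_1(n)q^n+q^{16}\sum K_2(n)q^n=\sum K_3(n)q^n$. Comparing coefficients of $q^n$ for $n\ge16$ gives the theorem.
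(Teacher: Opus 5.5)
Your proposal is correct and is essentially the paper's proof: the paper's (4.9) is exactly the identity you get from (o) after multiplying by $q^{8}$, rewriting the right side in base $q^{68}$ and dividing by $(q^{1\pm},q^{33\pm},q^{32\pm},q^{34\pm};q^{68})_\infty(q^{17\pm};q^{68})_\infty^2$, followed by the same coefficient comparison. The bookkeeping you flagged works out as you predicted. $(q^{17};q^{17})_\infty^2$ supplies $(q^{34};q^{68})_\infty^2$, the factor $1/(q^{34};q^{34})_\infty$ in $\phi(-q^{17})$ removes one copy, and $1/\psi(q^{34})$ restores it, while the $(q^{68};q^{68})_\infty$ factors cancel. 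What remains is $(q^{34\pm};q^{68})_\infty=(q^{34};q^{68})_\infty^2$, so under convention (4.4) parts $\equiv 34 \pmod{68}$ are effectively counted with two colors in $K_1$ and $K_2$.
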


\begin{proof}
    Proceeding as in the proof of Theorem 4.1, identity (o) in Theorem 2.2 can be expressed as
\begin{align}
	\frac{1}{(q^{32\pm}, q^{34\pm}; q^{68})_{\infty}(q^{1\pm}, q^{17\pm}; q^{68})_{\infty}^2}
	&+ q^{16} \frac{1}{(q^{32\pm}, q^{34\pm}; q^{68})_{\infty}(q^{17\pm}, q^{33\pm}; q^{68})_{\infty}^2} \\
	&= \frac{1}{(q^{16\pm}, q^{18\pm}; q^{68})_{\infty}(q^{1\pm}, q^{33\pm}; q^{68})_{\infty}^2}
	\tag{4.9}
\end{align}

Noting the generating functions, we write (4.9) as
\begin{equation}
\sum_{n=0}^{\infty} K_1(n) q^n
+ q^{16} \sum_{n=0}^{\infty} K_2(n) q^n
= \sum_{n=0}^{\infty} K_3(n) q^n,
\tag{4.10}
\end{equation}
where we take $K_1(0) = K_2(0) = K_3(0) = 1$.

The required result follows by comparing the coefficients of $q^n$ on both sides of (4.10).
\end{proof}

\medskip

\textbf{Example.}
By enumerating the relevant partitions of $n = 18$, one can verify that
\[
K_1(18) = 23, \quad K_2(2) = 0, \quad K_3(18) = 23,
\]
which satisfy Theorem (4.2).

	\bigskip
	\bigskip
	
	\noindent
	Department of Mathematics\\
	Ramanujan School of Mathematical Sciences\\
	Pondicherry University\\
	Puducherry- 605 014, India.\\

	\noindent Email: \texttt{iamdipikasarkar@pondiuni.ac.in}
	
	\noindent	Email: \texttt{dr.fathima.sn@pondiuni.ac.in} (\Letter)
	
\end{document}